\newcommand\les{\lesssim}
\newcommand\ges{\gtrsim}
\newcommand\R{\mathbb{R}}
\newcommand\Z{\mathbb{Z}}
\newcommand\N{\mathbb{N}}
\renewcommand\S{\mathbb{S}}
\newcommand{\calQ}{\mathcal Q}
\newcommand{\calT}{\mathcal T}
\newcommand{\calE}{\mathcal E}
\newcommand{\calH}{\mathcal H}
\newcommand{\calC}{\mathcal C}
\newcommand{\calN}{\mathcal N}
\newcommand{\calR}{\mathcal R}
\newcommand{\calL}{\mathcal L}
\newcommand{\bx}{\bold{x}}
\newcommand{\ls}{{\lesssim}}
\newcommand\la{\langle}
\newcommand\ra{\rangle}
\newtheorem{theo}{Theorem}
\numberwithin{theo}{section} 
\newtheorem{lema}[theo]{Lemma}
\newtheorem{prop}[theo]{Proposition}
\newtheorem{defin}[theo]{Definition}
\newtheorem{rem}{Remark}
\newtheorem*{ncon}{Conjecture}
\numberwithin{equation}{section}
\begin{document}
\title{The optimal trilinear restriction estimate for a class of  hypersurfaces with curvature}

\author[I. Bejenaru]{Ioan Bejenaru} \address{Department
  of Mathematics, University of California, San Diego, La Jolla, CA
  92093-0112 USA} \email{ibejenaru@math.ucsd.edu}

\begin{abstract} In \cite{BeCaTa} nearly optimal $L^1$ trilinear restriction estimates in $\R^{n+1}$ 
are established under transversality assumptions only.
In this paper we show that the curvature improves the range of exponents, by establishing $L^p$ estimates,  
for any $p > \frac{2(n+4)}{3(n+2)}$ in the case of double-conic surfaces. The exponent $\frac{2(n+4)}{3(n+2)}$ is shown to be the universal threshold for the trilinear estimate.  

\end{abstract}

\subjclass[2010]{42B15 (Primary);  42B25 (Secondary)}
\keywords{Trilinear restriction estimates, Shape operator, Wave packets}

\maketitle

\section{Introduction}

For $n \geq 1$, let $U \subset \R^{n}$ be an open, bounded and connected  neighborhood of the origin and let $\Sigma: U \rightarrow \R^{n+1}$ be a smooth parametrization of an $n$-dimensional submanifold of $\R^{n+1}$ (hypersurface), which we denote by $S=\Sigma(U)$. To this we associate the operator $\calE$ defined 
by
\[
\calE f(x) = \int_U e^{i x \cdot \Sigma(\xi)} f(\xi) d\xi. 
\]
Given $k$ smooth, compact hypersurfaces $S_i \subset \R^{n+1}, i=1,..,k$, where $1 \leq k \leq n+1$, the $k$-linear restriction estimate is the following
inequality 
\begin{equation}  \label{MRE}
\| \Pi_{i=1}^k \calE_i f_i \|_{L^p(\R^{n+1})} \les \Pi_{i=1}^k \| f_i \|_{L^2(U_i)}.  
\end{equation}
In a more compact format this estimate is abbreviated as follows:
\[
\calR^*(2 \times ... \times 2 \rightarrow p).
\] 
The fundamental question regarding the above estimate is the value of the optimal $p$ for which it holds true. Given that the 
estimate $\calR^*(2 \times ... \times 2 \rightarrow \infty)$ is trivial, the optimality is translated into the smallest $p$ for which the
estimate holds true. In  \cite{BeCaTa} Bennett, Carbery and Tao clarified the role of transversality between the surfaces involved and established that
under a transversality condition between $S_1,..,S_k$ the optimal exponent is $p=\frac2{k-1}$; the actual result in \cite{BeCaTa} is near-optimal, the optimal problem is currently open. The optimality can be easily revealed by taking $S_i$ to be transversal hyperplanes, in which case the estimate becomes the classical Loomis-Whitney inequality.  

It is also known, in some cases (precisely when $k \leq 2$), or expected, in most of the others, that curvature assumptions improve the range of exponents in \eqref{MRE}, except for the case $k=n+1$. In \cite{BeCaTa}, the authors state that "simple heuristics suggest that the optimal $k$-linear restriction theory requires at least $n+1-k$ non-vanishing principal curvatures, but that further curvature assumptions have no further effect". However, this aspect of the theory is left as an open problem in \cite{BeCaTa}. 
We detail below what is known for each $k$. 

The case $k=1$ has been understood for a very long time. Without any curvature assumptions, the optimal exponent is $p=\infty$; once the surface has some non-vanishing principal curvatures, the exponent improves to $p=\frac{2(l+2)}{l} $, where $l$ is the number of non-vanishing principal curvatures. The case of non-zero Gaussian curvature, corresponding to $l=n$, is the classical result due to Tomas-Stein, see \cite{St}.

The case $k=2$ without any curvature assumptions corresponds to the classical $L^2$ bilinear estimate, where the optimal estimate has been established. 
The case $k=2$ with curvature assumptions brought a twist into the understanding of the role of curvature in this problem. The best possible exponent in 
$\calR^*(2 \times 2 \rightarrow p)$ is $p=\frac{n+3}{n+1}$ and it was conjectured in \cite{FoKl}; it came as a surprise the fact that it was optimal not only for  paraboloids (having $n$ non-zero principal curvatures), but also for cones (having $n-1$ non-zero principal curvatures and a vanishing principal curvature). This problem was intensely studied for most important model surfaces, see \cite{Bou-CM, Wo, Tao-BW, Tao-BP, TV-CM1, Lee-BR,LeeVa}. In \cite{Be2}, we addressed the problem for general surfaces  and revealed a more subtle way in which the curvature comes into play by stating conditions in terms of the shape operators of the surfaces involved.  

The last case for which \eqref{MRE} is fairly well-understood is the case $k=n+1$. We note that in this case, additional curvature assumptions have no effect 
on the optimality of $p$ and this is consistent with the expectation that the optimal $k$-linear restriction theory requires at least $n+1-k$ non-vanishing principal curvatures, which in this case translates into no curvature. It is conjectured that if the hypersurfaces $S_i \subset \R^{n+1}$ are transversal, then \eqref{MRE} holds true for $p \geq p_0=\frac{2}n$. If $S_i$ are transversal  hyperplanes, \eqref{MRE}  is the classical Loomis-Whitney inequality and its proof is elementary.  Once the surfaces are allowed to have non-zero principal curvatures, things become far more complicated and the problem has been the subject of extensive research, see \cite{BeCaTa,Gu-main} and references therein. 
In \cite{BeCaTa}, Bennett, Carbery and Tao establish a near-optimal version of \eqref{MRE}: this is \eqref{MRE} 
with an additional $R^\epsilon$ factor when the estimate is made over balls of radius $R$ in $\R^{n+1}$. The optimal result for \eqref{MRE}, that is without the $\epsilon$-loss, is an open problem; in some cases one can use $\epsilon$-removal techniques to derive the result without the $\epsilon$-loss for $p > \frac2n$, see \cite{BoGu} for the case of surfaces with non-vanishing Gaussian curvature. The end-point for the multilinear Kakeya version of \eqref{MRE} (a slightly weaker statement than \eqref{MRE}) has been established by Guth in \cite{Gu-main} using tools from algebraic topology. 

In the remaining cases, $n \geq 3$ and $3 \leq k \leq n$,  the $k$-linear restriction theory has been addressed in \cite{BeCaTa} where the authors established the near-optimal result for $p \geq \frac{2}{k-1}$. The exponent $\frac2{k-1}$ is sharp for generic surfaces,
but it is not expected to be the optimal exponent once curvature assumptions are brought into the problem. 

In this paper make a first step towards understanding the effect of curvature in the cases $3 \leq k \leq n$. We will argue that the correct conjectured best optimal exponent should be $p(k)=\frac{2(n+1+k)}{k(n+k-1)}$; note that this is strictly smaller than the generic optimal value $\frac{2}{k-1}$. The easy task is to show that one cannot obtain any estimates for $p <p(k)$, regardless of the amount of curvature involved; this is done by using the classical squashed-cap example. The more difficult task is to show that the conjecture is not vacuous by proving it for some class of surfaces: the multi-conical surfaces.

In this paper we look at the trilinear estimate corresponding to $k=3$ and prove the conjecture for a particular class of surfaces: the double-conic ones. These surfaces have the nice property that they have the exact "amount" of curvature to obtain the estimate with the optimal exponent $p(3)$, and no more, in the sense that they are "flat" in the unnecessary directions. This result is comparable to the one of Wolf \cite{Wo} where he proves the bilinear restriction estimate for conical surfaces (which would translate into $1$-conical surfaces in our context). 

In a forthcoming paper we will provide the equivalent result for $4 \leq k \leq  n$ for $k-1$-conical surfaces. The reason to split 
the case $k=3$ from $k\geq 4$ is that the argument for the latter case incurs not only additional technical difficulties, but also conceptual ones.

We now formalize the above introduction and start with the statement of the conjecture. 
\begin{ncon} 
Under appropriate transversality and curvature conditions on the surfaces $S_i$, $\calR^*(2 \times ... \times 2 \rightarrow p)$ 
holds true for any $p \geq p(k)=\frac{2(n+1+k)}{k(n+k-1)}$. 
\end{ncon}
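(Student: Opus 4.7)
My plan is to establish \eqref{MRE} for $p\ge p(k)$ by wave-packet analysis combined with an induction on scales that upgrades bilinear curvature information into a $k$-linear improvement over the Bennett--Carbery--Tao transversality exponent $\frac{2}{k-1}$. First I would formalize the ``appropriate curvature conditions'': following the BCT heuristic and the shape-operator framework of \cite{Be2}, I would require the tangent spaces $T_{p_i}S_i$ to span transversally a $k$-plane and, on the $(n+1-k)$-dimensional orthogonal complement, the second fundamental forms of $S_i$ to satisfy a quantitative non-degeneracy matching the expected count of $n+1-k$ non-vanishing principal curvatures. A scaling computation on a frequency cap of size $\delta$ then identifies $p(k)$ as the correct exponent, since an $R^{-1/2}$-cap of $S_i$ concentrates on a plate of thickness $R^{-1}$ along the transverse curved directions, and the Loomis--Whitney-type tiling on the $k$-plane of transversality yields precisely the prefactor $R^{(n+1+k)/(2k)}$ underlying $p(k)$.

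Next I would localize \eqref{MRE} on a ball $B_R\subset\R^{n+1}$ with an $R^\e$ loss (to be removed in the open range $p>p(k)$ by a standard $\e$-removal), and decompose each $\calE_i f_i$ into wave packets at the critical scales $R^{1/2}$ (spatial) and $R^{-1/2}$ (angular). The induction is on $R$. The bilinear input comes from \cite{Be2}, which, under the shape-operator hypothesis, yields $\calR^*(2\times 2\to q_{ij})$ for some $q_{ij}<2$ for every pair $(S_i,S_j)$. At scale $R$ I would partition the wave-packet families by pairwise angular separation: on the broad part, where all pairs of tubes are well-separated, I iterate the bilinear estimate in the spirit of Tao--Vargas--Vega and Lee--Vargas to recover a $k$-linear bound at exponent $p(k)$; on the narrow part, the problem rescales to a smaller frequency cap and re-enters the induction hypothesis with a favourable power of $R$. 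Summing the contributions and balancing the powers of $R^\e$ closes the induction.

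\emph{Main obstacle.} The central difficulty is that $p(k)$ lies strictly below $\frac{2}{k-1}$, so the BCT estimate alone cannot close the induction, and a single application of the bilinear input from \cite{Be2} typically produces only an intermediate exponent. Reaching the sharp $p(k)$ requires iterating through the full ladder of $j$-linear curvature estimates for $2\le j\le k$ with compatible curvature hypotheses, combined by a careful H\"older interpolation --- alternatively, a polynomial-partitioning scheme \`a la Guth. In contrast to the $k=2$ case of \cite{Be2}, one must control not only the size but also the relative \emph{orientation} of the restricted shape operators of all $k$ surfaces simultaneously, so that the curvature gains do not cancel against the transversality angles; encoding this into a stable, Brascamp--Lieb-type coefficient that survives induction seems to be the genuinely new technical step. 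Finally, obtaining the endpoint $p=p(k)$ (as opposed to $p>p(k)$) will likely require a separate endpoint argument, as the induction on scales only delivers the open range.
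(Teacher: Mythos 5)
The statement you are asked to prove is the paper's \emph{Conjecture}, which the paper itself leaves open: the paper only establishes (i) the sharpness of $p(k)$ via a squashed-cap example (Section \ref{pk}) and (ii) the case $k=3$ for the special class of double-conic surfaces satisfying hypotheses i)--iii) (Theorem \ref{mainT}), and even there only in the open range $p>p(3)$. Your text is a strategy outline rather than a proof --- you yourself flag the ``genuinely new technical step'' and the endpoint as unresolved --- so it cannot be accepted as a proof of the conjecture; at best it should be compared with the paper's partial result.

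Even as a strategy, two of your load-bearing ingredients are known to fail in this regime. First, you propose to prove the localized estimate on $B_R$ with an $R^\e$ loss and then invoke ``standard $\e$-removal'' for $p>p(k)$; but the $\e$-removal machinery (e.g.\ \cite[Lemma 2.4]{TV-CM1}) requires $p>1$, whereas $p(k)=\frac{2(n+1+k)}{k(n+k-1)}<1$ for $k\geq 3$ --- the paper explicitly identifies this obstruction and circumvents it by running Tao's margin/table induction, which produces the unlocalized estimate directly. Second, your broad--narrow scheme iterating the bilinear input of \cite{Be2} is not what the paper does even for $k=3$: the paper's argument hinges on a \emph{localized $k$-linear $L^1$ estimate} (the refinement \eqref{MECL} of \eqref{MEC} from \cite{Be1}, quantifying the gain $\mu^{\frac{n-2}{2}}$ when one input has frequency support in a $\mu$-neighborhood of a low-dimensional affine subspace), used as a black box inside a wave-packet table construction; the author stresses that the trilinear $L^1$ estimate is ``rigid'' and does not blend with bilinear arguments the way \eqref{bil2} does. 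Moreover, the key table estimate \eqref{Phia2} is an $L^1$ bound exploiting the triangle inequality, which already breaks in $L^{2/(k-1)}$ for $k\geq 4$, so an induction of the type you describe faces a genuine conceptual (not merely technical) obstacle beyond $k=3$. Your heuristic identification of the exponent $p(k)$ via cap scaling is consistent with the paper's squashed-cap computation, but the positive direction of the conjecture remains open, and your proposal does not close it.
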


A complete resolution of this problem should have important consequences. The multilinear theory discussed above has had major impact in other problems. We mention a few such examples: In Harmonic Analysis, the bilinear and 
$n+1$ restriction theory was used to improve results in the context of Schr\"odinger maximal function, see \cite{Bou-SMF,Lee-SMF,TV-CM2, DL},  restriction conjecture, see \cite{Tao-BP,BoGu, Gu-res}, the decoupling conjecture, see \cite{BoDe,BoDeGu}. In Partial Differential Equations, the linear theory inspired the Strichartz estimates, see \cite{Tao-book}, while the bilinear restriction theory is used in the context of more sophisticated techniques, such as the profile decomposition, see \cite{MeVe}, and concentration compactness methods, see \cite{KeMe}. We expect that a positive resolution of the Conjecture will have an impact in some of the problems mentioned above. 

We will make an attempt in shedding some light into the appropriate conditions required above, but we believe that only a complete resolution of the Conjecture will clarify the role of geometry into the problem. 

The transversality condition is clear by now: for any choice $\zeta_i \in S_i, i=1,..,k$, if $N_i(\zeta_i)$ is the unit normal to $S_i$, then the 
\[
vol(N_1(\zeta_1),..,N_k(\zeta_k)) \geq \nu >0
\] 
where $vol(N_1(\zeta_1),..,N_k(\zeta_k))$ is the volume of the $k$-dimensional parallelepiped generated by 
the normals $N_i(\zeta_i)$. This is natural to impose, given that it matches the condition required in the generic case where
no curvature is assumed, see \cite{BeCaTa}. 

The more delicate part is to list the curvature conditions on $S_i$. For a hypersurface $S \subset \R^{n+1}$ and $\zeta \in S$, we denote by $N(\zeta)$ the normal to $S$ (assumed to be orientable) at $\zeta$ and by $S_{N(\zeta)}$ the shape operator at $S$ at $\zeta \in S$ with respect to the normal $N(\zeta)$. For an in-depth introduction to the shape operator, we refer the reader to any classical textbook in differential geometry, see for instance \cite{doCa}. For a quick introduction tailored towards the use in the context of bilinear estimates, see \cite{Be2}. Note that we use the same letter $S$ both for the hypersurface and the shape operator: the subscript 
$N$ will indicate that we are referring to the shape operator. 

As for the role of curvature in the above Conjecture, we anticipate a condition of type 
\[
vol(N_1(\zeta_1),..,N_k(\zeta_k), S_{N_j(\zeta_j)} v_{k+1},.., S_{N_j(\zeta_j)} v_{n+1}) \geq \nu >0
\] 
for any choices $\zeta_i \in S_i, i=1,..,k$, for any $j \in \{ 1,..,k \}$, for any choice $\zeta_j \in S_j$ and for any choice 
$v_{k+1},..,v_{n+1}$ is a orthonormal vector basis to specific $n-k+1$-dimensional submanifolds $S \subset S_j$ 
(it suffices to check this for one basis). 
Here $S_{N_i(\zeta_i)}$ is the shape operator at $S_i$ at $\zeta_i \in S_i$ with respect to the normal $N_{i}(\zeta_i)$. 
We expect these submanifolds to be exhausted by the family
\[
S(\beta_1,.., \beta_{j-1},\beta_{j+1},..,\beta_{n+1}) = S_ j \cap_{l \ne j} (\beta_l + S_l).   
\]
for any choices $\beta_1,.., \beta_{j-1},\beta_{j+1},..,\beta_{n+1} \in \R^{n+1}$. But we do not go as far as claiming this to be the ultimate 
curvature condition. 

In Section \ref{pk} we show, by a counterexample, that the range stated in the conjecture is sharp, that is 
\eqref{MRE} fails for $p < \frac{2(n+1+k)}{k(n+k-1)}$ for conic surfaces, but also for quadratic surfaces. This highlights
 that regardless of how much curvature is brought into the problem, the threshold $\frac{2(n+1+k)}{k(n+k-1)}$ is the best exponent
 that can occur in $\calR^*(2 \times ... \times 2 \rightarrow p)$. 

As for positive results, in this paper we look at the case $k=3$ and establish the almost optimal conjectured result for a particular class of hypersurfaces which we introduce below. 

We recall the definition of a foliation. A $2$-dimensional foliation of the ($n$-dimensional) hypersurface $S$ is a decomposition of $S$ into a union of connected disjoint sets $\{ S_\alpha \}_{\alpha \in A}$, called the leaves of the foliation, with the following property: every point in $S$ has a  neighborhood $V$ and local system of coordinates $x: V \subset S \rightarrow \R^{n}$ such that for each leaf $S_{\alpha}$, the coordinates of $V \cap S_{\alpha}$ are $\xi_3=constant,..,\xi_{n}=constant$. 

We are now ready to formalize the conditions we impose on our surfaces. As before, $S_i, \in \{1,2,3 \}$ are hypersurfaces with smooth parameterizations $\Sigma_i: U_i \subset \R^n \rightarrow \R^{n+1}$, where each $U_i$ are open, bounded and connected  neighborhood of the origin (note that different $U_i$ may belong to different hyperplanes identified with the same $\R^n$). In addition, we assume the following three hypothesis:

i) (foliation) for each $i \in \{1,2,3\}$, the hypersurface $S_i$ admits the foliation
\[
S_i = \bigcup_{\alpha} S_{i,\alpha}
\]
where, for each $\alpha$, the leaf $S_{i,\alpha}$ is a flat submanifold of dimension $2$.  
 
ii) (the leaves are completely flat) If $S_{N_i(\zeta_i)}$ is the shape operator of $S_i$ at $\zeta_i \in S_i$ with choice of normal $N_i(\zeta_i)$ we assume that for every $v \in T_{\zeta_i} S_{i,\alpha}$ (the tangent plane at $S_{i,\alpha}$ at the point $\zeta_i \in S_{i,\alpha}$) the following holds
\[
S_{N_i(\zeta_i)} v =0. 
\]

iii) (transversality and curvature) For any $\zeta_i \in S_i, i \in \{1,2,3\}$, for any $l \in \{1,2,3\}$ and for any orthonormal basis 
$v_4,..,v_{k+1} \in (T_{\zeta_l} S_{l,\alpha})^\perp \subset T_{\zeta_l} S_{l,\alpha}$ the following holds true 
\begin{equation} \label{curva}
vol ( N_1(\zeta_1), N_2(\zeta_2), N_3(\zeta_3), S_{N_l(\zeta_l)} v_4 ,  ... , S_{N_l(\zeta_l)} v_{n+1} ) \geq \nu. 
\end{equation}

In \eqref{curva} the $vol$ is the standard volume form of $n+1$ vectors in $\R^{n+1}$, thus the condition quantifies the linear independence of the vectors $N_1(\zeta_1), N_2(\zeta_2), N_3(\zeta_3), S_{N_l(\zeta_l)} v_4 ,  ... , S_{N_l(\zeta_l)} v_{n+1}$. 

The condition ii) says that $S_{i,\alpha}$ are, in some sense, completely flat components of $S_i$ since, besides being subsets of
affine planes of dimension $2$, the normal $N(\zeta)$ to $S_i$ is constant as we vary $\zeta$ along $S_{i,\alpha}$ for fixed $\alpha$. 

The first things to read in condition iii) is the transversality condition between $S_1,S_2$ and $S_3$ due to the transversality between any choice on normals.  The condition iii) also says that the submanifolds transversal to the leafs carry the curvature assumptions, in the sense that their tangent space does not contain any eigenvectors of the shape operator. In addition, for each $i \in \{1,2,3\}$, we are guaranteed to have transversality between $N_1(\zeta_1), N_2(\zeta_2), N_3(\zeta_3)$ and $S_{N_i} T_{\zeta_i} (S_{l,\alpha})^\perp$.

At this point we can state the main result of this paper. 
\begin{theo} \label{mainT}  Assume that $S_1,S_2,S_3$ satisfy the conditions i)-iii) above. 
Given any $p$ with $p(3)=\frac{2(n+4)}{3(n+2)}< p \leq \infty$, the following holds true
\begin{equation} \label{mainE}
\| \Pi_{i=1}^3 \calE_i f_i \|_{L^p(\R^{n+1})} \leq C(p) \Pi_{i=1}^3 \| f_i \|_{L^2(U_i)}, \quad \forall f_i \in L^2(U_i).   
\end{equation}
\end{theo}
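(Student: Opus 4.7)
The plan is to follow an induction-on-scales scheme in the spirit of \cite{BeCaTa, Wo}, with a wave packet decomposition adapted to the double-conic geometry encoded in conditions i)--iii).

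By a standard $\epsilon$-removal argument (cf.\ \cite{BoGu, Tao-BP}), it suffices to establish, for every $\epsilon>0$ and every $p>p(3)$, the localized estimate
\[
\Bigl\| \Pi_{i=1}^3 \calE_i f_i \Bigr\|_{L^p(B_R)} \les R^{\epsilon} \Pi_{i=1}^3 \|f_i\|_{L^2(U_i)}
\]
on every ball $B_R\subset\R^{n+1}$ of radius $R$. For each $S_i$ I would then introduce a wave packet decomposition of $\calE_i f_i$ at frequency scale $R^{-1/2}$ \emph{only} along the $n-2$ directions transverse to the leaves. This is the correct decomposition in view of condition ii): since the shape operator annihilates the tangent space of each leaf, no further frequency localization is needed in those two tangent directions. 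The resulting physical-space wave packets $T_i$ are boxes of dimensions roughly $R$ in the two flat leaf-tangent directions, $R^{1/2}$ in each of the $n-2$ curved tangent directions, and $R$ along the normal $N_i$; each tube has volume $R^{(n+4)/2}$. The exponent $p(3)$ is the scaling exponent that matches this wave-packet volume against the $\sim R^{(n-2)/2}$ number of transverse caps.

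The key geometric input is a uniform bound on the volume of triple intersections $T_1\cap T_2\cap T_3$. By condition iii), the $n+1$ vectors $N_1, N_2, N_3, S_{N_l}v_4,\ldots,S_{N_l}v_{n+1}$ form a quantitative basis of $\R^{n+1}$; decomposing the intersection along this basis, the three normals absorb the three "long" $R$-directions of the three tubes, while the $S_{N_l}v_j$ directions are pinched to scale $R^{1/2}$ by the curvature of $S_l$ transverse to the leaves. This yields the bound $|T_1\cap T_2\cap T_3|\les R^{(n+1)/2}$, which combined with Cauchy--Schwarz and an orthogonality argument along the leaves delivers the target $L^p$ inequality at $p=p(3)$, with the $R^\epsilon$ loss.

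To remove this loss I would run an induction on $R$ together with a broad--narrow dichotomy: the broad case is handled by the triple-intersection bound above, while the narrow case, where the three wave packets concentrate near a common leaf, reduces to a lower-dimensional trilinear estimate covered by \cite{BeCaTa}. I expect the main obstacle to be the triple-intersection calculation: because each wave packet is $R$-long in three directions (the two flat tangent plus the normal), the counting must be carried out \emph{modulo the leaves}, on the $(n-2)$-dimensional space parametrizing the foliation. It is precisely there that conditions i)--iii) interact to produce the improvement from the generic transversal exponent $\frac{2}{k-1}=1$ down to $p(3)=\frac{2(n+4)}{3(n+2)}$, distinguishing the double-conic case from both \cite{BeCaTa} and the classical bilinear conical estimate of \cite{Wo}.
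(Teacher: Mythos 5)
Your opening reduction is a genuine gap. You propose to dispose of the global-in-space estimate by ``a standard $\epsilon$-removal argument,'' but the exponents at stake here satisfy $p\le 1$ (indeed $p(3)=\frac{2(n+4)}{3(n+2)}<1$ for $n\ge 3$, and the theorem is trivial for $p=\infty$, so the whole content lies in $p(3)<p\le 1$). The known $\epsilon$-removal lemmas (e.g.\ \cite[Lemma 2.4]{TV-CM1}) require $p>1$, so this reduction is not available; this is precisely why the paper sets up the quantities $A_p(R)$, $\bar A_p(R)$ with the margin requirement \eqref{mrpg} and closes the induction on scales directly for every $p>p(3)$ via the recursive inequality \eqref{ApR}, using Tao's refined wave packet decomposition (the averaging over translations $\Omega$, the $(c,j)$-interiors, and the $(1+cC)$ mass losses) rather than proving an $R^\epsilon$-lossy estimate first.

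The second gap is in the core of the argument. Your wave packet geometry is right (localization only in the $n-2$ directions transverse to the leaves, tubes of volume $R^{(n+4)/2}$), but the claimed bound $|T_1\cap T_2\cap T_3|\les R^{(n+1)/2}$ followed by ``Cauchy--Schwarz and an orthogonality argument'' is not a proof, and it is doubtful it can be made one: each tube is $R$-long in \emph{three} directions (two leaf-tangent plus the normal), condition iii) controls the span of $N_1,N_2,N_3$ together with the shape-operator images $S_{N_l}v_j$ but says nothing directly about the nine leaf-tangent/normal directions of the three tubes being in general position, and in any case passing from a Kakeya-type intersection count to an $L^p$ bound with $p<1$ is exactly the step that is not standard. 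The paper's route is different: it uses as a black box a \emph{localized} trilinear restriction estimate \eqref{MECL} (the refinement of \cite{Be1}, giving the gain $\mu^{(n-2)/2}$ when one factor has frequency support in a $\mu$-neighborhood of a $3$-plane --- here the plane spanned by a leaf and its normal, with $\mu=r^{-1}\approx R^{-1/2}$), combined with a table construction $\Phi_1=\Phi_c(\phi_1,\phi_2,Q)$ that redistributes $\phi_1$ over subcubes according to the mass of $\phi_2$ on each tube, and a combinatorial count of pairs $(T_2,T_3)$ that rests on Lemma \ref{GL} (transversality of $dspan\,\calN_1$ with $\calN_2,\calN_3$). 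Your broad--narrow dichotomy, with the narrow case ``reduced to a lower-dimensional trilinear estimate covered by \cite{BeCaTa},'' is likewise unsubstantiated and plays no role in the actual argument. As written, the proposal identifies the correct heuristics and numerology but does not contain a proof.
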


To our best knowledge this result is the first instance when the trilinear restriction estimate is shown to improve its range of exponents when curvature assumptions are made; this statement has to be carefully explained, since given large enough $n$ one can obtain trilinear restriction estimates in $L^p$ with $p<1$ based on linear and bilinear restriction estimates, for the latter one see \cite{LeeVa}.  
We are not only going beyond the known $L^1$ estimate, but also provide the sharp result, except for the endpoint $p=p(3)$, and this is truly a feature of the trilinear estimate. In some instances, our result also improves the known results in the range $p \geq 1$, where the standard result is near-optimal, while ours is optimal. Given that the estimate is trivial for $p=\infty$, it suffices to focus on the result above in the cases
$p(3) < p \leq 1$ and this is what we will do.

One aspect that is revealed by the above result is that the optimal trilinear restriction estimate discards the effect of $2$ curvatures; indeed, each $S_i$ has precisely $2$ vanishing principal curvatures. In light of the similar results in the bilinear case, 
this result indicates that the optimal $k$-linear restriction estimate discards the effect of $k-1$
curvatures and relies only on $n+1-k$ curvatures, although the actual statements have to be more rigorous. 

Now we list some hypersurfaces that would fit in our class and for each the above result is applicable. One such model is given by the equation:
\begin{equation} \label{model}
\zeta_{n+1} +  \zeta_{n} = \sqrt{\zeta_1^2+ ...+ \zeta_{n-1}^2}. 
\end{equation}
Away from the origin, this surface has $2$ vanishing principal curvatures and $n-2$ non-vanishing principal curvatures.
If one writes $(\zeta_1,..,\zeta_{n-1})=r \cdot \omega$ with $r=\sqrt{\zeta_1^2+ ...+ \zeta_{n-1}^2}$ and $\omega \in \S^{n-2}$
the above equation becomes $\zeta_{n+1} +  \zeta_{n}=r$, thus revealing the source of the vanishing curvatures as coming from the embedded planes of dimension $2$; these are the leaves of the foliation. 

Another model is given by the cylinders: 
\[
\zeta_1^2+ ...+ \zeta_{n-1}^2=1
\]
where the freedom in choosing $\zeta_{n}$ and $\zeta_{n+1}$ gives rise to the desired structure. 
 
At this time we are not able to establish a similar result for more general surfaces, in particular for hypersurfaces with non-zero Gaussian curvature, such as
the paraboloid $\zeta_{n+1}=\zeta_1^2+ ...+ \zeta_{n}^2$. This may come as a surprise as one expects more curvature to be helpful. By now there is enough body of evidence that additional curvature complicates these problems. Probably the most clear piece of evidence lies in the $n+1$-linear estimate in $R^{n+1}$ (where no curvature is needed for optimal results): 
if the surfaces are transversal hyperplanes, the $L^{\frac1n}$ estimate follows easily from the classical Loomis-Whitney inequality, 
while if the surfaces are assumed to have curvature, the estimate is considerably harder and open.

We now explain some of the key ideas in the proof of Theorem \ref{mainT}. Some of the starting ideas originate in the prior work of the author on the bilinear restriction estimate in \cite{Be2} and the multilinear restriction estimate in \cite{Be1}.

The argument for the bilinear restriction estimate that we provided in \cite{Be2} was inspired by many previous works in the subject, 
and we chose to follow the less common approach of Tao in \cite{Tao-BW}. One of the novelties of our argument is that it 
reveals in a clear way a key feature of the argument: at some stage of it, the proof relies on replicating the argument of the classical bilinear $L^2$ estimate (on the Fourier side, thanks to the Plancherel fromula). To be more specific, inside the body of the argument,
we reproduce a proof of the bilinear estimate
\begin{equation} \label{bil2}
\| \calE_1 f_1 \cdot \calE_2 f_2 \|_{L^2} \les \| f_1 \|_{L^2(U_1)} \| f_2 \|_{L^2(U_2)},
\end{equation}
where transversality assumptions are made on $S_1=\Sigma_1(U_1)$ and $S_2=\Sigma_2(U_2)$. This bilinear estimate is used in a localized setup, meaning  that one quantifies the effect of  $U_1$ or $U_2$ having small support. 
We note that in the combinatorial strategy of proving the bilinear restriction estimate, used for instance in \cite{Tao-BP}, one replaces
the bilinear $L^2$ estimate with its quadralinear version and the relevance of the bilinear $L^2$ estimate \eqref{bil2} is less apparent. 
 
 Thus the natural idea is to follow the same route in the trilinear setup, that is to rely, at some stage of the argument, on the trilinear estimate
\begin{equation} \label{tri2}
\| \calE_1 f_1 \cdot \calE_2 f_2 \cdot \calE_3 f_3 \|_{L^1} \les \| f_1 \|_{L^2(U_1)} \| f_2 \|_{L^2(U_2)} \| f_3 \|_{L^2(U_3)}.
\end{equation}
 This is where one hits an immediate road-block. While the bilinear $L^2$ estimate is well understood and very malleable, with arguments that easily blend into those used in \cite{Be2}, we found the trilinear $L^1$ estimate to be very rigid. In particular, one needs to use the estimate in a  black-box fashion. Moreover, one needs to use a localized version of the trilinear estimate, in the sense that one of the factors has small frequency support, or, equivalently, the surface has small support in some directions. We have not found such a result in literature and probably for a good reason: all the previous proofs for the multilinear restriction estimates seem to go through the multilinear Kakeya and it is not clear how to translate the localized multilinear estimate into a multilinear Kakeya estimate. This was part of the motivation for our work in \cite{Be1} where we provided a direct proof of the multilinear estimate, followed by a refinement in the context of small support. In Section \ref{LocME} we adapt the result from \cite{Be1} to our current needs. 

The next tool is to construct tables adapted to waves which are able to highlight the dispersive effects in outer regions.
This is done by using a wave packet decomposition and as in \cite{Be2}, we use of the refined wave packet decomposition introduced by Tao in \cite{Tao-BW}. This requires a bit more work than the classical wave packet construction, but it has the advantage of providing a direct venue for closing the argument for $p > p(3)$ without additional $\epsilon$-removal techniques, see \cite{TV-CM1} for example. This is important because the $\epsilon$-removal results, see \cite[Lemma 2.4]{TV-CM1}, work for $L^p$ spaces with $p>1$, hence they do not cover the ranges of $L^p$ spaces we are interested in.

We expect to be able to obtain a similar result to the one in Theorem \ref{mainT} for the $k$-linear restriction estimate with $k \geq 4$. The argument provided here fails to work for $k \geq 4$ and we can easily  point to  such an instance: the key estimate \eqref{Phia2} is an $L^1$ estimate and the triangle inequality holds in this space, but it fails in $L^\frac{2}{k-1}$ (and this is the space to be used in the corresponding step for the $k$-linear restriction estimate). But, in fact, there are more subtle points where the argument here simply fails to carry over to the case $k \geq 4$.

\subsection{Notation} \label{not}
We start by clarifying the role of various constants that appear in the argument. 
$N$ is a large integer that depends only on the dimension. $C$ is a large constant that may change
from line to line, may depend on $N$, but not on $c$ and $C_0$ introduced below. $C$ is used in the definition 
of: $A \ls B$, meaning $A \leq C B$,  $A \ll B$, meaning $A \leq C^{-1} B$, and $A \approx B$, meaning $A \les B \wedge B \les A$.
For a given number $r \geq 0$, by $A = O(r)$ we mean that $A \approx r$.  $C_0$ is a constant that is independent of any other constant and its role is to reduce the size of cubes in the inductive argument.  It can be set $C_0=4$ throughout the argument, but we keep it this way so that its role in the argument is not lost. $c \ll 1$ is a very small variable meant to make expressions $\ll 1$ and most estimates will be stated to hold in a range of $c$.

We use the standard notation $(\xi_1,..,\bar \xi_i,..\xi_{l}):= (\xi_1,..,\xi_{i-1}, \xi_{i+1}..,\xi_{l})$ to mean that one component is missing.

By powers of type $R^{\alpha+}$ we mean $R^{\alpha+\epsilon}$ for arbitrary $\epsilon > 0$. Practically they should be seen
as $R^{\alpha+\epsilon}$ for arbitrary $0 < \epsilon \ls 1$. The estimates where such powers occur will obviously depend on $\epsilon$. 

By $B(x,R)$ we denote the ball centered at $x$ with radius $R$ in the underlying space (most of the time it will be $\R^n$ or $\R^{n+1}$). 

Let $\eta_0:\R^n \rightarrow [0,+\infty)$ be a Schwartz function, normalized in $L^1$, that is $\| \eta_0 \|_{L^1}=1$,
and with Fourier transform supported on the unit ball. Given some $r>0$ we denote by $\eta_r(x)=r^{-n} \eta_0(r^{-1} x)$
and note that $\hat \eta_r$ is supported in $B(0,r)$. 

A disk $D \subset \R^{n+1}$ has the form
\[
D=D(x_D,t_D;r_D)=\{(x,t_D) \in \R^{n+1}: |x-x_D| \leq r_D\},
\]
for some $(x_D,t_D) \in \R^{n+1}$ and $r_D > 0$. We define the associated smooth cut-off
\[
\tilde \chi_{D}(x,t)= (1+\frac{|x-x_D|}{r_D})^{-N}. 
\] 

A cube $Q \subset \R^{n+1}$ of size $R$ has the standard definition $Q=\{(x,t) \in \R^{n+1}: \|(x-x_Q,t-t_Q) \|_{l^\infty} \leq \frac{R}2 \}$, where $c_Q=(x_Q,t_Q)$ is the center of the cube. Given a constant $\alpha >0$ we define $\alpha Q$ to be the dilated by $\alpha$ of $Q$ from its center, that is
$\alpha Q=\{(x,t) \in \R^{n+1}: \|(x-x_Q,t-t_Q) \|_{l^\infty} \leq \alpha \cdot \frac{R}2 \}$. 

Given a cube $q \subset \R^{n+1}$ of size $r$ we will use three functions that are highly concentrated in $q$. 
The classic one is the characteristic function $\chi^c_q$ of $q$. We reserve the more standard notation $\chi_q$
for the next object which is used more frequently. This is build with the help of $\eta_0$ (we abuse
notation here as we should be using the corresponding $\eta_0:\R^{n+1} \rightarrow [0,+\infty)$ with similar properties):
\[
\chi_{q}(x) = \eta_0 (\frac{x-c(q)}r).
\]
This localization function has nice properties on the Fourier side. The other localization function is
\[
\tilde \chi_{q}(x) = (1+ |\frac{x-c(q)}r|)^{-N},
\]
for some large $N$. This localization has better properties on the physical side.

\subsection*{Acknowledgement}
Part of this work was supported by a grant from the Simons Foundation ($\# 359929$, Ioan Bejenaru).
Part of this work was supported by the National Science Foundation under grant No. $0932078000$ while the author was in residence at the 
Mathematical Research Sciences Institute in Berkeley, California, during the Fall 2015 semester. 

\section{Geometry of the surfaces and consequences}

We start by simplifying the setup. The surfaces are bounded, therefore we can always break them into smaller (and similar) pieces such as to accommodate the additional hypothesis described bellow. 

First we note that we can assume each $S_i$ to be of graph type: there is a smooth map 
$\varphi_i: U_i \subset \R^n \rightarrow \R$ such that $S=\{\Sigma_i(\xi)= (\xi,\varphi_i(\xi)): \xi \in U_i \}$.
Here $U_i$ are open, connected with compact closure. 
It is less important that the graphs are of type $\zeta_{n+1}=\varphi_i(\zeta_1,..,\zeta_{n})$ (we can have as well $\zeta_k=\varphi_i(\zeta_1,..,\bar \zeta_k,..\zeta_{n+1})$), although we can accommodate this by a rotation of coordinates. Then each flat leaf $S_{i,\alpha}$ corresponds to a flat leaf $U_{i,\alpha}$, in the sense that $\Sigma_i(U_{i,\alpha})=S_{i,\alpha}$; this is indeed the case since projections onto hyperplanes along a vector transversal to $S_i$ takes $2$-dimensional affine planes to $2$-dimensional affine planes. 

 We can find a system of coordinates $\bx_i: \R^n \rightarrow \R^n$ that parametrizes each leaf $U_{i,\alpha}$ into a new flat leaf 
$\tilde U_{i,\alpha}$ characterized by $\xi_3=constant,..,\xi_{n}=constant$. Finally, we assume that each $U_i$ has small enough diameter. 

Next, we derive a key geometric consequence of our setup. Given a surface $S_i$ we define $\calN_i=\{ N_i(\zeta_i): \zeta_i \in S_i \}$ be the set of normals at $S_i$. By $dspan \calN_i$ we denote the following subset of the classical span of $\calN_i$:
\[
dspan \calN_i = \{ \alpha N_\alpha + \beta N_\beta: N_\alpha, N_\beta \in \calN_i, \alpha, \beta \in \R \}.
\] 
$dspan \calN_i $ is the set of linear combinations of two vectors in $\calN_i$; it is not a linear subspace. 
With these notation in place, we claim the following result.
\begin{lema} \label{GL}
If $S_1, S_2, S_3$ satisfy the conditions i)-iii), then for any $N \in dspan \calN_1$ and any $N_2 \in \calN_2, N_3 \in \calN_3$
the following holds true for all real numbers $a,b,c$:
\begin{equation} \label{GLe}
|a N + b N_2 + c N_3| \ges \max(|a||N|,|b|,|c|).
\end{equation}
The statement is symmetric with respect to $S_1,S_2,S_3$. 
\end{lema}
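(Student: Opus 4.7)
The plan is to reduce \eqref{GLe} to a quantitative transversality statement between the 2-plane $P := \mathrm{span}(N_\alpha, N_\beta)$ (where $N = \alpha N_\alpha + \beta N_\beta$ realizes a decomposition of $N \in dspan\, \calN_1$) and the 2-plane $Q := \mathrm{span}(N_2, N_3)$. A uniform lower bound on the principal angles between $P$ and $Q$ is equivalent to the three unit vectors $N/|N|$, $N_2$, $N_3$ being uniformly $\nu$-transverse in $\R^{n+1}$, which by standard linear algebra immediately yields \eqref{GLe} with an implicit constant depending only on $\nu$ and the dimension; note that the bound on $|a||N|$ also follows from the other two via the triangle inequality $|a||N| = |v - bN_2 - cN_3| \leq |v| + |b| + |c|$.

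After subdividing the surfaces into finitely many pieces, as permitted at the start of this section, I may assume that each Gauss image $\calN_i$ has diameter at most $\delta$, with $\delta > 0$ to be chosen small. I would fix a base point $N_0 := N_1(\zeta_0) \in \calN_1$ and set $W_0 := S_{N_0}\big((T_{\zeta_0} S_{1,\alpha_0})^\perp\big)$, where $\alpha_0$ labels the leaf containing $\zeta_0$. By the Weingarten relation $dN_1 = -S_{N_1}$ together with condition ii), $W_0$ coincides with the tangent space $T_{N_0} \calN_1$ to the smooth $(n-2)$-submanifold $\calN_1 \subset S^n$ at $N_0$. Since $W_0 \subset T_{\zeta_0} S_1 \perp N_0$, the subspace $V_0 := \R N_0 \oplus W_0$ is an orthogonal direct sum of dimension $n-1$, and condition iii) applied at $\zeta_0$ with $l=1$, namely $vol(N_0, N_2, N_3, W_0) \geq \nu$, translates into a uniform lower bound $\theta_0 > 0$ on the principal angles between the $(n-1)$-plane $V_0$ and the $2$-plane $Q$.

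The crux would be to show that $P$ lies within principal angle $O(\delta)$ of $V_0$, uniformly over $N_\alpha, N_\beta \in \calN_1$. I would carry this out via Taylor expansion of $\calN_1$ at $N_\alpha$ (not at $N_0$, for the reason explained below): smoothness gives $N_\beta = N_\alpha + w + r$ with $w \in T_{N_\alpha}\calN_1$ and $|r| \leq C|N_\alpha - N_\beta|^2$. Writing any unit $u \in P$ in the orthonormal basis $\{N_\alpha,\, (N_\beta - \cos\theta\, N_\alpha)/\sin\theta\}$ of $P$, with $\theta$ the angle between $N_\alpha$ and $N_\beta$, a direct computation shows that the potentially-singular factor $1/\sin\theta$ is controlled by the quadratic smallness $|r| \leq C\sin^2(\theta/2)$, yielding $\mathrm{dist}(u, V_\alpha) \leq C\delta$ for $V_\alpha := \R N_\alpha \oplus T_{N_\alpha}\calN_1$. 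Smoothness of $\calN_1$ also gives that $V_\alpha$ is within principal angle $O(\delta)$ of $V_0$, hence $\mathrm{dist}(u, V_0) \leq C'\delta$.

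The main technical obstacle will be the change-of-basis singularity arising when $N_\alpha$ and $N_\beta$ are nearly parallel; expanding at the fixed point $N_0$ would produce residuals of order $\delta^2$ that are not quite enough to beat the $1/\sin\theta$ factor in the limit $\theta \to 0$. Expanding instead at $N_\alpha$ upgrades the residual to $O(|N_\alpha - N_\beta|^2) = O(\sin^2(\theta/2))$, which does beat the $1/\sin\theta$ and yields the desired $O(\delta)$ bound. Once the geometric fact above is established, combining it with the transversality of $V_0$ and $Q$ produces principal angles between $P$ and $Q$ bounded below by $\theta_0 - O(\delta)$, which is at least $\theta_0/2$ once $\delta$ is chosen sufficiently small, completing the argument.
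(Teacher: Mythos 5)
Your argument is correct and follows essentially the same route as the paper's: a first-order Taylor expansion of the Gauss map based at $N_\alpha$, with the quadratic remainder beating the $1/\sin\theta$ degeneracy as $N_\beta \to N_\alpha$, combined with condition iii) to get quantitative transversality of $\mathrm{span}\bigl(N_\alpha, S_{N_\alpha}(T_{\zeta_\alpha}S_{1,\alpha})^\perp\bigr)$ with $N_2, N_3$. The only cosmetic differences are that the paper works directly with the vector $N=\alpha N_\alpha+\beta N_\beta$ along a curve $\gamma$ in $S_1$ rather than with principal angles of the plane $P$, and applies iii) at $\zeta_\alpha$ itself instead of passing through the fixed base plane $V_0$; note also that the bound $|r|\leq C|N_\alpha-N_\beta|^2$ uses, besides smoothness, the lower bound $|N_\alpha-N_\beta|\gtrsim d(S_{1,\alpha},S_{1,\beta})$ of \eqref{dN}, which is itself a consequence of iii).
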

We note that the occurrence of $|N|$ in $\max(|a||N|,|b|,|c|)$ is motivated by the fact that vectors in $dspan \calN_1$
are not normalized, but vectors in $\calN_2$ and $\calN_3$ are. 

\begin{proof} \eqref{GLe} is equivalent to a transversality condition between $N,N_2,N_3$:
\[
vol(N,N_2,N_3) \ges |N|
\]
We write $N=\alpha N_\alpha + \beta N_\beta$ for some $N_\alpha \ne N_\beta$ and consider $\gamma: [0,t_0] \rightarrow S_1$, a smooth curve with the property that 
$ N_1(\gamma(0))=N_\alpha$ and $ N_1(\gamma(t_0))=N_\beta$. We also assume that $|\gamma'(t)| = 1$ on $[0,t_0]$ and that $0 \leq t_0 \ll 1$; this is possible because we assumed $U_1$ of small diameter.  In addition, if $\alpha_0$ is such that $\gamma(0) \in S_{i,\alpha_0}$, we can assume that 
$\gamma'(0) \in (T_{\gamma(0)} S_{1,\alpha_0})^\perp$.  Then we have
\[
\begin{split}
N_1(\gamma(t_0)) & = N_1(\gamma(0)) + \int_{0}^{t_0} S_{N_1(\gamma(s))} \gamma'(s) ds \\
& = N_1(\gamma(0)) + t_0 S_{N_1(\gamma(0))} \gamma'(0) + O(t_0^2).  
\end{split}
\]
We then continue with
\[
\begin{split}
N & = \alpha N_1(\gamma(0))+ \beta (N_1 + t_0 S_{N_1(\gamma(0))} \gamma'(0) + O(t_0^2)  \\
& = (\alpha+\beta) N_1(\gamma(0)) + \beta t_0 S_{N_1(\gamma(0))} \gamma'(0) + \beta O(t_0^2) 
\end{split}
\]
The two vectors $N_1(\gamma(0))$ and $S_{N_1(\gamma(0))} \gamma'(0)$ are transversal,  thus 
$|N| \approx |\alpha+\beta| + t_0 |\beta|  |S_{N_1(\gamma(0))} \gamma'(0)|$ (here we use that $t_0 \ll 1$), and also 
\[
\begin{split}
vol(N,N_2,N_3) & \approx vol((\alpha+\beta) N_1(\gamma(0)) + \beta t_0 S_{N_1(\gamma(0))} \gamma'(0) ,N_2,N_3) \\
& \ges |(\alpha+\beta) N_1(\gamma(0)) + \beta t_0 S_{N_1(\gamma(0))} \gamma'(0)| \approx |N|
\end{split}
\]
where we have used the following consequence of \eqref{curva}:
\[
vol(N_1(\gamma(0)), S_{N_1(\gamma(0))} v ,N_2,N_3) \ges 1,
\]
which holds true for any unit vector $v \in (T_{\gamma(0)} S_{1,\alpha_0})^\perp \subset T_{\gamma(0)} S_{1}$.
\end{proof}

Using a similar argument as above, one can easily establish the following dispersive estimate
\begin{equation} \label{dN}
| N_i(\zeta_1) - N_i(\zeta_2) | \approx d(S_{i,\alpha_1}, S_{i,\alpha_2})
\end{equation}
where $S_{i,\alpha_1}, S_{i,\alpha_2}$ are the leafs to which $\zeta_1, \zeta_2$ belong to, respectively. Here the distance between 
$S_{i,\alpha_1}$ and $S_{i,\alpha_2}$ can be defined either by using geodesics inside the hypersurface $S_i$ (using the induced metric from the ambient space $R^{n+1}$) or, equivalently, by using the classical distance between sets in $R^{n+1}$.

\section{Restating the problem} \label{CT}
 
 \subsection{Rephrasing the problem in terms of free waves} \label{FW}
 
 We reformulate our problem in terms of free waves, this being motivated by the use of wave packets in order to prove Theorem \ref{mainT}. Once the wave packet decomposition is made and its properties of the packets are clear, the formalization of the problem as an evolution equation can be forgotten. 

Assume we are given a surface $S$ with a graph type parametrization $\zeta_{n+1}=\varphi(\xi)$ where $\xi = (\zeta_1,..,\zeta_n)$.
We rename the variable $\zeta_{n+1}$ by $\tau$, thus the equation of $S$ becomes $\tau=\varphi(\xi)$. We parametrize the physical space by $(x,t) \in \R^{n} \times \R$. We make the choice that $\tau$ is the Fourier variable corresponding to $t$, while
$\xi$ is the Fourier variable corresponding to $x$.  In what follows we use the convention that $\hat f$ denotes the Fourier transform of $f$ with respect to the $x$ variable.

We define the free wave $\phi = \calE f$ as follows
 \[
 \phi(x,t) = \calE f(x,t) = \int_{\R^n} e^{i(x\cdot \xi + t \varphi(\xi))} f(\xi) d\xi.
 \]
 Note that $\phi(0)=\check{f}$ and $\hat \phi(\xi,t) = e^{it \varphi(\xi)} \hat \phi(\xi,0)$. 
 We define the mass of a free wave by $M(\phi(t)):= \| \phi(t) \|^2_{L^2}$ and note that it is time independent:
 \[
 M(\phi(t)):= \| \phi(t) \|^2_{L^2}= \| \hat \phi(t) \|^2_{L^2} = \| \hat \phi(0) \|^2_{L^2}= \| \phi(0) \|^2_{L^2}= M(\phi(0)). 
 \]
The proof of \eqref{mainE} relies on estimating $\Pi_{i=1}^3 \calE_i f_i $ on cubes on the physical side and see how this 
behaves as the size of the cube goes to infinity by using an inductive type argument with respect to the size of the cube.
Before we formalize this strategy, we note that at every stage of the inductive argument we re-localize functions both on the physical and frequency space, and, as a consequence, we need to quantify the new support on the frequency side. This will be done by using the using the margin of a function. 

We assume we are given a reference set $V$ inside which we want to keep all functions supported. If $f$ is supported in $U \subset V$ we define
the margin of $f$ relative to $V$ by
\[
\mbox{margin}(f) := \mbox{dist}(\mbox{supp} (f), V^c). 
\]
In terms of free waves $\phi = \calE f$, the margin is defined by
\[
\mbox{margin}(\phi(t)) := \mbox{dist}(\mbox{supp}_\xi (\hat \phi(t)), V^c)= \mbox{dist}(\mbox{supp} (f), V^c),
\]
where we have used that the Fourier support of $\hat \phi(t)$ is time independent and that $\hat \phi(0)=f$. In other words, 
the margin of a free wave is time independent. 

In practice, we work with three different types of free waves, $\phi_i = \calE_i f_i$. They are assumed to be graphs with different phase functions
$\varphi_i$ and with potentially different ambient domain, that is $U_i$ are subsets of different subspaces isomorphic to $\R^n$ (for instance $U_i$ can be 
subsets of the hyperplanes $\xi_i=0$). The above construction changes only in making the choice of $\tau$ being the coordinate in the direction normal to the ambient hyperplane to which $U_i$ belongs to, while $\xi$ are the coordinates in the ambient hyperplane. Obviously, the margin of each $\phi_i$ is then defined with respect to some $V_i$ int the same ambient hyperplane. When choosing the reference sets $V_i$ we need to impose that the conditions i)-iii) hold true on $\Sigma_i(V_i)$ as well. 

Next, we prepare the elements that are needed for the induction on scale argument. 

\begin{defin} Let $p(3) \leq p \leq 1$. Given $R \geq C_0$ we define $A_p(R)$ to be the best constant for which the estimate
\begin{equation}
\| \Pi_{i=1}^k \phi_i   \|_{L^p(Q_R)} \leq A_p(R)  \Pi_{i=1}^k M(\phi_i)^\frac12. 
\end{equation}
holds true for all cubes $Q_R$ of size-length $R$, $\phi_i=\calE_i f_i$ and 
obeying the margin requirement
\begin{equation} \label{mrpg}
margin^i(\phi_i) \geq M-R^{-\frac14}, i=1,..,k.
\end{equation}
\end{defin}

The goal is to obtain an uniform estimate on $A_p(R)$ with respect to $R$. In the absence of the margin requirement above, 
$A_p(R)$ would be an increasing function. However, since the argument needs to tolerate 
the margin relaxation, we also define
\[
\bar A_p(R):= \sup_{1 \leq r \leq R} A_p(r) 
\]
and the new $\bar A_p(R)$ is obviously increasing with respect to $R$.  

Then \eqref{mainE}, and as a consequence the main result of this paper, Theorem \ref{mainT}, follow from the next result. 
\begin{prop} \label{keyP} Assume $0 < \epsilon < 1$. If $R \gg 2^{2C_0}$ and $R^{-\frac14+} \ll c \ll 1$, there exists $C(\epsilon)$
such that the following holds true:
\begin{equation} \label{ApR}
A_p(R) \leq (1+cC) \left( (1+cC)^p \left( \bar A_p(\frac{R}2) \right)^p   + 
\left( C(\epsilon) c^{-C} R^{ \frac{n+4}2(\frac1p-\frac32 \cdot \frac{n+2}{n+4})+\epsilon} \right)^p \right)^\frac1p.
\end{equation}
\end{prop}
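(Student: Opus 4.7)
\textbf{Proof proposal for Proposition \ref{keyP}.}

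The approach is a one-step induction on scale combined with a wave packet dichotomy, in the spirit of Tao's bilinear cone argument and its adaptation in \cite{Be2}. The structure of the right-hand side of \eqref{ApR} as a $p$-th power sum encodes the fact that $p \leq 1$ and that the quasi-triangle inequality $\|f+g\|_{L^p}^p \leq \|f\|_{L^p}^p + \|g\|_{L^p}^p$ will be the tool that assembles the two contributions. The exponent $\frac{n+4}{2}(\frac{1}{p}-\frac{3}{2}\cdot \frac{n+2}{n+4})$ vanishes exactly at $p=p(3)$, so the second term should come from a localized trilinear estimate that is sharp precisely at this threshold.

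\textbf{Step 1 (Wave packet decomposition).} First I would decompose each free wave $\phi_i = \calE_i f_i$ into a superposition of wave packets adapted to the geometry of $S_i$. Using Tao's refined wave packet decomposition of \cite{Tao-BW}, as recast in \cite{Be2}, one writes $\phi_i = \sum_T \phi_i^T$ where $T$ runs over curved tubes of length $R$ whose widths reflect the foliation structure: the natural parabolic width $R^{1/2}$ in the $n-1$ genuinely curved directions transverse to the leaves $S_{i,\alpha}$, and a flat structure along the $2$-dimensional leaves (by Lemma \ref{GL} and hypothesis ii)). Each $\phi_i^T$ is essentially concentrated in $T$ on the physical side with Schwartz decay outside, and is frequency-localized near a disk of $S_i$ whose diameter is set by the tube width.

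\textbf{Step 2 (Cube subdivision and dichotomy).} I would partition $Q_R$ into a collection $\{q_j\}$ of sub-cubes of side length $R/C_0$ (hence $\leq R/2$), and on each $q_j$ decompose, for each $i$,
\begin{equation*}
\phi_i = \phi_i^{q_j,c} + \phi_i^{q_j,d},
\end{equation*}
where $\phi_i^{q_j,c}$ collects packets whose tubes meet a $c$-neighborhood of $q_j$, and $\phi_i^{q_j,d}$ is the diffuse remainder. The refined construction ensures the two pieces are almost orthogonal, preserve the total mass up to a multiplicative factor $1+cC$, and the margin degrades by at most $R^{-1/2} \ll R^{-1/4}$, so the output still satisfies \eqref{mrpg} at scale $R/2$.

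\textbf{Step 3 (Concentrated case and induction).} On the purely concentrated cell, where every factor is $\phi_i^{q_j,c}$, the triple product is (up to Schwartz tails controlled by $1+cC$) the product of free waves whose relevant physical support fits inside a cube of size $R/2$ around $q_j$. Applying the definition of $\bar A_p(R/2)$ on each $q_j$, raising to the $p$-th power and summing over $j$ produces the first summand $((1+cC)\bar A_p(R/2))^p$ in \eqref{ApR}; the outer $(1+cC)$ in \eqref{ApR} absorbs the wave packet tail contributions gathered along the way.

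\textbf{Step 4 (Diffuse case).} On the remaining cells, at least one of the factors is the diffuse piece $\phi_i^{q_j,d}$. Here I would invoke the localized multilinear restriction estimate of Section \ref{LocME} (the refinement of \cite{Be1}), which provides a trilinear $L^p$ bound with the power loss $R^{\frac{n+4}{2}(\frac{1}{p}-\frac{3}{2}\cdot\frac{n+2}{n+4})+\epsilon}$. This is the decisive input: the $n-1$ curved directions (transversality plus the shape operator condition \eqref{curva}) are exactly what is needed to reduce the generic multilinear loss down to the double-conic threshold. The $c^{-C}$ factor records the cost of the angular separation parameter in the wave table construction, and $C(\epsilon)$ reflects the standard $\epsilon$ loss in a multilinear restriction / Kakeya-type estimate.

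\textbf{Step 5 (Assembly).} Finally I would use the $L^p$ quasi-triangle inequality, $\|F+G\|_{L^p(Q_R)}^p \leq \|F\|_{L^p(Q_R)}^p + \|G\|_{L^p(Q_R)}^p$, to add the concentrated contribution from Step 3 and the diffuse contribution from Step 4. This yields \eqref{ApR}, with the two summands appearing as $p$-th powers inside a $1/p$-th root and with the uniform prefactor $1+cC$ coming from the Schwartz tails of the decomposition.

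\textbf{Main obstacle.} The crux of the argument lies in Step 4: making the localized multilinear restriction estimate produce exactly the sharp exponent $\frac{n+4}{2}(\frac{1}{p}-\frac{3}{2}\cdot\frac{n+2}{n+4})+\epsilon$. A weaker gain would cause the recursion \eqref{ApR} to fail to close (the iterated error would dominate $A_p(R)$), while anything stronger would contradict the counterexample of Section \ref{pk}. Engineering the localized estimate so that it exploits precisely the $n-1$ curved directions encoded by condition iii), while remaining neutral on the $2$ flat leaf directions, is what makes the double-conic geometry the natural setting. A secondary technical point is the careful bookkeeping of margin through the wave packet decomposition so that the induction at scale $R/2$ is legal.
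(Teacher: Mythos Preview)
Your outline captures the high-level architecture (induction on scales, wave packets, a good-part/bad-part dichotomy, assembly via the $L^p$ quasi-triangle inequality), but Step~4 contains a genuine gap, and the dichotomy you describe in Step~2 is not the one that actually makes the argument close.

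First, Section~\ref{LocME} does \emph{not} supply a trilinear $L^p$ bound with the exponent $\frac{n+4}{2}(\frac1p-\frac32\cdot\frac{n+2}{n+4})+\epsilon$. The only estimate there is the $L^1$ bound \eqref{MECL}, and it applies only when \emph{one} factor has Fourier support in a $\mu$-neighborhood of a $3$-plane. Your ``diffuse'' piece $\phi_i^{q_j,d}$ is a sum over many tubes with widely varying leaf-frequencies and has no such localization, so \eqref{MECL} cannot be applied to it as a black box. In the paper the localized $L^1$ estimate is applied \emph{tube by tube} to $\phi_{1,T_1}$ (which is leaf-localized), and the resulting sums are controlled by two separate combinatorial estimates \eqref{mes}, \eqref{mes2}; the first of these uses in an essential way the geometry of tubes from \emph{all three} surfaces via Lemma~\ref{GL}. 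The sharp $L^p$ exponent then appears only after interpolating the resulting off-diagonal $L^1$ bound $R^{-\frac{n-2}{4}+\epsilon}$ against the trivial $L^{2/3}$ bound $R^{3/2}$.

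Second, the decomposition is not a per-cube concentrated/diffuse split of each $\phi_i$ independently. The paper builds a \emph{table} $\Phi_1=\Phi_c(\phi_1,\phi_2,Q)$ in which each component $\Phi_1^{(q_0)}$ is a weighted sum of \emph{all} packets of $\phi_1$, with weights $m_{q_0,T_1}/m_{T_1}$ determined by the mass of $\phi_2$ inside $q_0$ along the tube $T_1$. This cross-dependence is what produces the off-diagonal gain \eqref{Phia2}; a purely self-referential split of $\phi_1$ into packets hitting or missing $q_j$ would not. The tables are then built cyclically ($\Phi_2$ from $(\phi_2,\phi_3)$, $\Phi_3$ from $(\phi_3,\Phi_1)$), and only the fully diagonal term $\Phi_1^{(q_0)}\Phi_2^{(q_0)}\Phi_3^{(q_0)}$ is fed into the induction at scale $R/2$. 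The averaging Lemma~\ref{AVL} and the special mass bound \eqref{q00e} (which requires Tao's randomized wave packet construction) are what make the $(1+cC)$ factors, rather than $O(1)$ factors, appear; with a naive decomposition one would get constants that cannot be iterated.
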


We show how \eqref{mainE} follows from \eqref{ApR}. From the beginning we can assume the $\bar A_p(R) \geq 1$;
from this we obtain the simpler inequaltity
\[
A_p(R) \leq (1+cC)^2 \bar A_p(\frac{R}2) \left( 1 + \left( C(\epsilon) c^{-C} R^{ \frac{n+4}2(\frac1p-\frac32 \cdot \frac{n+2}{n+4})+\epsilon} \right)^p \right)^\frac1p.
\]
Since $p > \frac23 \cdot \frac{n+4}{n+2}$, we set $c^{-C}=R^{-\frac{n+4}4(\frac1p-\frac32 \cdot \frac{n+2}{n+4})-\epsilon}$, 
that is $c = R^{\frac{n+4}{4C}(\frac1p- \frac32 \cdot \frac{n+2}{n+4})+\frac{\epsilon}{C}}$, and note that $c$ satisfies $R^{-\frac14+} \ll c \ll 1$, provided $C(n,p)$ is large enough and $\epsilon$ is chosen small enough such that $\frac{n+4}{4}(\frac1p- \frac32 \cdot \frac{n+2}{n+4})+\epsilon < 0$. Then we apply the previous inequality to obtain
\[
A_p(R) \leq (1+cC)^2 \bar A_p(\frac{R}2) \left( 1 + 
\left( C(\epsilon) R^{ \frac{n+4}4(\frac1p-\frac32 \cdot \frac{n+2}{n+4})} \right)^p \right)^\frac1p.
\]
For $R$ large enough, $C(\epsilon) R^{ \frac{n+4}4(\frac1p-\frac32 \cdot \frac{n+2}{n+4})} < 1$, and by applying Bernoulli's inequality,
$(1+x)^r \leq 1+(2^r-1)x$ for $|x| \leq 1$ and $0 < r < 1$, we continue with
\[
A_p(R) \leq  (1+cC)^2  \bar A_p(\frac{R}2) \left( 1 + (2^\frac1p-1) C(\epsilon) R^{ \frac{n+4}4(\frac1p-\frac32 \cdot \frac{n+2}{n+4})} \right).
\]
Taking the maximum with respect to $r \in [\frac{R}2,R]$ gives
\[
\bar A_p(R) \leq (1+C R^{\frac{n+4}{4C}(\frac1p- \frac32 \cdot \frac{n+2}{n+4})})^2 \bar A_p(\frac{R}2) \left( 1 + (2^\frac1p-1) C(\epsilon) R^{ \frac{n+4}4(\frac1p-\frac32 \cdot \frac{n+2}{n+4})} \right).
\]
Since both powers of $R$ are negative, 
$\frac{n+3}{4C}(\frac1p-\frac{n+1}{n+3}), \frac{n+3}4(\frac1p-\frac{n+1}{n+3}) < 0$, this estimate can be iterated
to show that $\bar A_p(R)$ is uniformly bounded in terms of $\bar A_p(C^\alpha 2^{2C_0})$ for all $R \geq C^\alpha 2^{2C_0}$; 
here $\alpha$ is chose large enough so that if $R \geq C^\alpha 2^{2C_0}$, then $C(\epsilon) R^{ \frac{n+4}4(\frac1p-\frac32 \cdot \frac{n+2}{n+4})} < 1$. Since $\bar A_p(C^\alpha 2^{2C_0})$ is bounded by a constant depending on $C_0$, $C$ and $p$ (and $p$ influences the choice of $\epsilon$), \eqref{mainE} follows and we conclude the proof of Theorem \ref{mainT}. 
 
\subsection{Tables on cubes}
 
Let $Q \subset \R^{n+1}$ be a cube of radius $R$. Given $j \in \N$ we split $Q$ into $2^{(n+1)j}$ cubes of size $2^{-j} R$ and denote this family by $\calQ_j(Q)$; thus we have $Q=\cup_{q \in \calQ_j(Q)} q$.  
If $j \in \N$ and $0 \leq c \ll 1$ we define the $(c,j)$ interior $I^{c,j}(Q)$ of $Q$ by
\begin{equation} \label{icj}
I^{c,j}(Q) := \bigcup_{q \in \calQ_j(Q)} (1-c) q.
\end{equation}
Given $j \in \N$ we define a table $\Phi$ on $Q$ to be a vector $\Phi=(\Phi^{(q)})_{q \in \calQ_j(Q)}$ and define its mass by
\[
M(\Phi) = \sum_{q \in \calQ_j(Q)} M(\Phi^{(q)}). 
\]
We define the margin of a table as the minimum margin of its components:
\[
margin(\Phi) = \min_{q \in \calQ_j(Q)} margin(\Phi^{(q)}).
\]

Inspired by the Lemma 6.1 in \cite{Tao-BW}, we will make use of the following result. 
\begin{lema} \label{AVL}
Assume $0 < p < \infty$, $R \gg 1$, $0 < c \ll 1$ and $f$ smooth. Given a cube $Q_R \subset \R^{n+1}$ of size $R$,
there exists a cube $Q$ of size $2R$ contained in $4 Q_R$ such that
\begin{equation} \label{avrg}
\| f \|_{L^p(Q_R)} \leq (1+cC) \|  f\|_{L^p(I^{c,j}(Q))}
\end{equation}
\end{lema}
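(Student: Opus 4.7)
The plan is a translation-averaging argument in the spirit of Lemma 6.1 of \cite{Tao-BW}. I would parametrize cubes of size $2R$ by their centers: with $c_{Q_R}$ the center of $Q_R$, let $Q_v$ denote the cube of size $2R$ centered at $c_{Q_R}+v$. A direct check shows that for $v\in V:=[-R/2,R/2]^{n+1}$ one has $Q_R\subset Q_v\subset 4Q_R$, so any such $v$ is an admissible choice for $Q$. Introduce the ``bad'' set
\[
B_v := Q_v\setminus I^{c,j}(Q_v)=\bigcup_{q\in\calQ_j(Q_v)}\bigl(q\setminus (1-c)q\bigr),
\]
a disjoint union of shells of thickness $\approx c\cdot 2^{-j}R$ around the boundaries of the subcubes of $Q_v$. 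The degenerate case $j=0$ is immediate, since $I^{c,0}(Q_0)=(1-c)Q_0\supset Q_R$ for $c\le 1/2$; so assume $j\ge 1$ from now on.

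The key observation I would exploit is the periodic structure of $B_v$ as a function of $v$: the subcubes of $Q_v$ form a grid of common sidelength $\ell:=2^{-j+1}R$ that simply translates with $v$, hence for each fixed $x\in\R^{n+1}$ the indicator $\chi_{B_v}(x)$ is $\ell$-periodic in each coordinate of $v$. The proportion of one period for which $x\in B_v$ equals $|B_0|/|Q_0|=1-(1-c)^{n+1}\le (n+1)c$. Since $R/\ell=2^{j-1}$ is a positive integer, $V$ is a union of an integer number of full periods, so Fubini and the periodicity give
\[
\frac{1}{|V|}\int_V\int_{Q_R\cap B_v}|f|^p\,dx\,dv \;=\; \int_{Q_R}|f(x)|^p\,\frac{|\{v\in V:x\in B_v\}|}{|V|}\,dx \;\le\; (n+1)c\int_{Q_R}|f|^p\,dx.
\]

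Finally, by the pigeonhole principle there exists $v_0\in V$ with $\int_{Q_R\cap B_{v_0}}|f|^p\le (n+1)c\int_{Q_R}|f|^p$. Using $Q_R\subset Q_{v_0}$, I would split $Q_R=(Q_R\cap I^{c,j}(Q_{v_0}))\cup(Q_R\cap B_{v_0})$ and rearrange to obtain
\[
\int_{Q_R}|f|^p \;\le\; \frac{1}{1-(n+1)c}\int_{I^{c,j}(Q_{v_0})}|f|^p,
\]
and then take $p$-th roots together with the elementary bound $(1-(n+1)c)^{-1/p}\le 1+Cc$ (valid for $c$ small enough, with $C$ depending on $n,p$) to recover \eqref{avrg} with $Q=Q_{v_0}$. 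I do not anticipate any serious obstacle: the only mild technicality is that the averaging set $V$ should be chosen to be a union of full periods so that the uniform volume bound on $\{v\in V:x\in B_v\}$ holds without boundary error, which is why the particular choice $V=[-R/2,R/2]^{n+1}$ (of sidelength exactly $2^{j-1}\ell$) is convenient.
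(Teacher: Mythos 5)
Your proof is correct and follows essentially the same route as the paper's: both average over translates of the size-$2R$ cube, bound the contribution of the "bad" boundary shells by $O(c)$ of the total, and pigeonhole to select a good translate. The only cosmetic difference is that you exploit exact periodicity of the bad set in the translation parameter, while the paper uses Fubini together with the uniform volume bound $|Q\setminus I^{c,j}(Q)|\leq (n+1)c|Q|$ valid for every center.
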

\begin{proof} Using Fubini's theorem, we have the following identity
\[
\int_{Q_R} \| f \|^p_{L^p((Q_R \cap I^{c,j}(Q(x,t;2R)))} dx dt= \int_{Q_R} |f(x,t)|^p |Q_R \cap I^{c,j}(Q(x,t;2R))| dx dt.
\]
From the definition of $I^{c,j}(Q(x,t;2R))$ it follows that
\[
|Q(x,t;2R) \setminus I^{c,j}(Q(x,t;2R))| \leq (n+1) c |Q(x,t;2R)| = (n+1) 2^{n+1} c |Q_R|
\]
and, as a consequence,
\[
|Q_R| \leq (1+(n+1)2^{n+1}c) | Q_R \cap  I^{c,j}(Q(x,t;2R)) |, \qquad \forall (x,t) \in Q_R.
\]
In the above we have used that if $(x,t) \in Q_R$ then $Q_R \subset Q(x,t;2R)$. 

Combining this estimates with the above identity, leads to
\[
\| f \|_{L^p}^p \leq \frac1{|Q_R|} \int_{Q_R} (1+(n+1)2^{n+1}c) \| f \|^p_{L^p((Q_R \cap I^{c,j}(Q(x,t;2R)))} dx dt
\]
By the pigeonholing principle, it follows that there is $(x,t) \in Q_R$ such that
\[
\| f \|^p_{L^p} \leq (1+(n+1)2^{n+1}c) \| f \|^p_{L^p((Q_R \cap I^{c,j}(Q(x,t;2R)))}
\]
and since $(1+(n+1)2^{n+1}c)^\frac1p \leq 1+cC $, the conclusion follows. Note that here the value of $p$ affects the choice of $C$.  
\end{proof}

\section{Wave packets} \label{SWP}

The wave packet construction is standard by now. The key elements are: an interpretation of the $\phi=\calE f$ 
as an evolution equation with initial data $\phi(0)=\check{f}$, a phase-spaces decomposition of the initial data in a linear space 
and an analysis of the evolution of each such component. 
The need of all these elements is the reason to require the more restrictive hypothesis of graph-like foliation. 

We now continue with the formalization of the wave packet construction for double-conical surfaces. We assume that $S$ is of double-conic type and it has the graph-type parametrization $\Sigma: U \rightarrow S$, where $\Sigma(\xi)=(\xi,\varphi(\xi))$ and with foliations
$U =  \cup_\alpha U_\alpha$, $S =  \cup_\alpha S_\alpha$, $\Sigma(U_\alpha)=S_\alpha$. 

For the foliation $U=  \cup_\alpha U_\alpha$, we choose a system of coordinates $\bf x \rm: U \rightarrow \tilde U$
such that for each leaf $U_{\alpha}$, the coordinates of $U_{\alpha}$ are $\xi_3=constant,..,\xi_{n}=constant$. Let
$\tilde U'=\pi(\tilde U)$, where $\pi : \R^{n} \rightarrow \R^{n-2}$ is the projection $\pi(\xi_1,..,\xi_n)= (\xi_3,..,\xi_n)$. 
Let $\tilde{\mathcal{L}}$ be a maximal $r^{-1}$-separated subset of $\tilde U' \subset \R^{n-2}$. 
For each $\tilde \xi \in \tilde{\mathcal{L}}$, $\bx^{-1}(\cdot, \cdot, \tilde \xi)$ is a leaf, that is
 $\bx^{-1}(\cdot, \cdot, \tilde \xi)=U_\alpha$ for some $\alpha$.  In each such leaf we pick $\xi_T$ and call $\mathcal{L}$ to be 
the set obtained this way. It is not important which $\xi_T \in \bx^{-1}(\cdot, \cdot, \tilde \xi)$ is chosen, since from condition ii) it follows that, for $\xi \in U_\alpha$, the normal $N(\Sigma(\xi))$ to $S$ is constant as $\xi$ varies inside the leaf $U_\alpha$. 
We denote by $U(\xi_T)$ the leaf $U_\alpha$ to which $\xi_T$ belongs and by $S(\xi_T)=\Sigma(U(\xi_T))$, the corresponding leaf on $S$. We note that $d(U(\xi_{T_1}), U(\xi_{T_2})) \approx d(\tilde \xi_1, \tilde \xi_2)$ which combined with \eqref{dN} gives
\begin{equation} \label{disp}
|N(\Sigma(\xi_{T_1})) - N(\Sigma(\xi_{T_2}))| \approx d(U(\xi_{T_1}), U(\xi_{T_2})) \approx d(\tilde \xi_1, \tilde \xi_2). 
\end{equation}

Let $L$ be the lattice $L=c^{-2} r \Z^n$. With $x_T \in L, \xi_T \in \mathcal{L}$ we define the tube $T=T(x_T,\xi_T):=\{ (x,t) \in \R^n \times \R: |x-x_T +  t \nabla \varphi(\xi_T) | \leq c^{-2} r \}$ and denote by $\calT$ the set of such tubes. One notices that $T$ is the $c^{-2}r$ neighborhood of the 
line passing through $(x_T,0)$ and direction $N(\Sigma(\xi_T))$. 

Associated to a tube $T \in \calT$, we define the cut-off $\tilde \chi_T$ on $\R^{n+1}$ by
\[
\tilde \chi_T(x,t)= \tilde \chi_{D(x_T -  t \nabla \varphi (\xi_T) ,t; c^{-2} r)}(x).
\] 
 We are ready to state the main result of this Section. 
\begin{lema} \label{LeWP} Let $Q$ be a cube of radius $R \gg 1$, let $c$ be such that $R^{-\frac14+} \ll c \les 1$ and let $J \in \N$ be such that $r =2^{-J} R \approx R^\frac12$. Let $\phi=\calE f$ be a free wave with $margin(\phi) > 0$. For each $T \in \calT$ there is a free solution 
$\phi _T$, that is localized in a neighborhood of size $CR^{-\frac12}$ of the leaf $S(\xi_T)$ and obeying $margin(\phi_T) \geq margin(f)-CR^{-\frac12}$. The map $ f \rightarrow \phi_T$ is linear and 
\begin{equation} \label{lind}
\phi= \sum_{T \in \calT} \phi_{T}.
\end{equation}
If $\mbox{dist}(T,Q) \geq 4 R$ then 
\begin{equation} \label{ld}
\| \phi_T \|_{L^\infty(Q)} \ls c^{-C} dist(T,Q)^{-N} M(\phi)^\frac12.
\end{equation}
The following estimates hold true
\begin{equation} \label{qest}
\sum_{T} \sup_{q \in Q_J(Q)} \tilde \chi_T(x_q,t_q)^{-N} \| \phi_T \|^2_{L^2(q)} \ls c^{-C} r M(\phi)  
\end{equation}
and
\begin{equation} \label{q00e}
\left( \sum_{q_0} M( \sum_T m_{q_0,T} \phi_T)  \right)^\frac12
\leq (1+cC) M(\phi),
\end{equation}
provided that the coefficients $m_{q_0,T} \geq 0$ satisfy
\begin{equation} \label{q0e}
\sum_{q_0} m_{q_0,T}=1, \qquad \forall T \in \calT.
\end{equation}
\end{lema}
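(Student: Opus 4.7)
The plan is to build $\phi_T$ via a double phase-space localization adapted to the foliation. Using the flattening coordinates $\bx$ from condition i), I would pick a smooth partition of unity $\{\psi_{\xi_T}\}_{\xi_T \in \mathcal{L}}$ on $U$ subordinate to the $r^{-1}$-tubular neighborhoods of the leaves $U(\xi_T)$ (each $\psi_{\xi_T}$ depending only on the two transverse coordinates), together with a smooth partition of unity $\{\beta_{x_T}\}_{x_T \in L}$ on $\R^n$ at scale $c^{-2}r$. Then set
\[
f_T := \F\bigl( \beta_{x_T} \cdot \F^{-1}(\psi_{\xi_T} f) \bigr), \qquad \phi_T := \calE f_T.
\]
Linearity is immediate, and telescoping the two partitions gives $\sum_T f_T = f$, proving \eqref{lind}.

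\textbf{Localization, margin, and pointwise decay.} Since convolution with $\hat \beta_{x_T}$ broadens $\mathrm{supp}(\psi_{\xi_T})$ by $O(c^2 r^{-1}) \les r^{-1}$, $f_T$ lies in an $O(R^{-1/2})$-enlargement of $U(\xi_T)$, giving both the leaf-concentration of $\phi_T$ and the margin bound. The crucial geometric input is condition ii): the shape operator vanishes along $U(\xi_T)$, hence $\nabla \varphi$ is constant there, so for $\xi \in \mathrm{supp}(f_T)$ one has $\nabla\varphi(\xi) = \nabla\varphi(\xi_T) + O(r^{-1})$; this identifies $T$ with the physical trajectory $t \mapsto x_T - t\nabla\varphi(\xi_T)$ of the initial packet $\beta_{x_T}$. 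For \eqref{ld}: when $(x,t) \in Q$ and $dist(T,Q) \geq 4R$, the $\xi$-gradient of the phase $x \cdot \xi + t\varphi(\xi)$ equals $(x - x_T + t \nabla\varphi(\xi_T)) + O(c^{-2}r + R/r)$, of magnitude $\approx dist(T,Q)$. Repeated non-stationary phase integration by parts (each gaining $(r \cdot dist(T,Q))^{-1}$) yields the claimed decay after absorbing $c^{-C}$ losses from derivatives of the partitions; in fact one gets arbitrary polynomial decay, which is what makes the $\tilde\chi_T^{-N}$ weight in \eqref{qest} finite.

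\textbf{$L^2$ estimates.} Both \eqref{qest} and \eqref{q00e} will reduce to quantitative orthogonality claims for $\{\phi_T\}$. For \eqref{qest}: on a cube $q$ essentially inside $T$, equidistribution of $\phi_T$ across its tube cross-section gives $\|\phi_T\|^2_{L^2(q)} \ls c^{2n-C} r \, M(\phi_T)$ (a factor $(r/c^{-2}r)^n$ from the cross-sectional volume ratio and a factor $r$ from time integration); for far cubes the arbitrary polynomial decay from the previous step beats the $\tilde\chi_T^{-N}$ penalty. Summing in $T$ reduces the estimate to $\sum_T M(\phi_T) \ls (1+cC) M(\phi)$, which follows from the construction by arranging $\sum_{\xi_T} \psi_{\xi_T}^2$ and $\sum_{x_T} \beta_{x_T}^2$ to be $(1+cC)$-close to $1$, after which Plancherel and the partition identities give the claim. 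For \eqref{q00e} expand
\[
\sum_{q_0} M\Bigl( \sum_T m_{q_0,T} \phi_T \Bigr) = \sum_{T,T'} \Bigl( \sum_{q_0} m_{q_0,T} m_{q_0,T'} \Bigr) \langle \phi_T, \phi_{T'} \rangle.
\]
The diagonal ($T=T'$) contributes $\sum_T \bigl( \sum_{q_0} m_{q_0,T}^2 \bigr) M(\phi_T) \leq \sum_T M(\phi_T) \leq (1+cC) M(\phi)$, using $\sum_{q_0} m_{q_0,T}^2 \leq \sum_{q_0} m_{q_0,T} = 1$; for $T \neq T'$, Cauchy--Schwarz bounds the $m$-factor by $1$ and $\langle \phi_T, \phi_{T'}\rangle$ is controlled by near-orthogonality (frequency supports essentially disjoint when $\xi_T \neq \xi_{T'}$, and physical-space supports at $t=0$ essentially disjoint when $\xi_T = \xi_{T'}$ but $x_T \neq x_{T'}$), with a standard Schur-type sum to conclude.

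\textbf{Main obstacle.} The hard part will be quantifying the near-orthogonality with the sharp $(1+cC)$ constant rather than a mere $O(1)$, since this factor propagates through the induction on scale of Proposition \ref{keyP}. Doing so will require careful selection of the two partitions of unity so that interface overlaps between neighboring packets contribute only $O(c)$; condition ii) is exactly what allows the frequency partition to use cutoffs that are constant along each leaf and hence do not spread the effective support of $f_T$ beyond the $r^{-1}$-scale.
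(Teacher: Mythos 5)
Your construction of $\phi_T$ via two fixed smooth partitions of unity (one in frequency, subordinate to the $r^{-1}$-neighborhoods of the leaves, one in space at scale $c^{-2}r$) is adequate for \eqref{lind}, \eqref{ld}, \eqref{qest}, the leaf-localization and the margin bound, and your use of condition ii) to conclude that $\nabla\varphi$ is constant along each leaf is exactly the right geometric input. The genuine gap is in \eqref{q00e}. You correctly identify that the whole difficulty is the constant $1+cC$ rather than $O(1)$, but the proposed fix --- ``careful selection of the two partitions of unity so that interface overlaps contribute only $O(c)$'' --- cannot work for a fixed, $f$-independent (hence linear) construction. After expanding $\sum_{q_0} M(\sum_T m_{q_0,T}\phi_T)=\sum_{T,T'} c_{T,T'}\langle \phi_T,\phi_{T'}\rangle$ with $c_{T,T'}=\sum_{q_0}m_{q_0,T}m_{q_0,T'}\in[0,1]$, one has $\sum_{T,T'}\langle\phi_T,\phi_{T'}\rangle=M(\phi)$ exactly, so what is actually needed is $\sum_{T\neq T'}|\langle\phi_T,\phi_{T'}\rangle|\leq cC\,M(\phi)$. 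For a fixed smooth frequency partition $\{\psi_{\xi_T}\}$ this fails: take $f$ supported entirely in the overlap strip of two adjacent cutoffs where $\psi_{\xi_1}\approx\psi_{\xi_2}\approx\tfrac12$ and spatially concentrated near a single $x_T$; then $|\langle\phi_{T_1},\phi_{T_2}\rangle|\approx\tfrac14 M(\phi)$, which is not $O(c)M(\phi)$. Shrinking the overlap width does not help, since $f$ may be concentrated precisely there; no Schur-type argument recovers the loss because the obstruction is a single adjacent pair, not a long-range sum.

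The paper's proof avoids this by following Tao's refined packet decomposition: the frequency projectors are \emph{sharp} characteristic functions of Voronoi cells $A_{\xi_0}$, averaged over a group $G$ of translations $\Omega$ of size $O(r^{-1})$ equipped with a probability measure, $\phi_T(0)=\eta^{x_T}\int P_{\Omega,\xi_T}\phi(0)\,d\Omega$. Minkowski pulls the $\Omega$-integral outside; for each fixed $\Omega$ the projectors restricted to the ``good set'' $\tilde Y$ (points at distance $\geq Cc^2r^{-1}$ from all cell boundaries) have exactly disjoint supports even after the spatial mollification, giving perfect orthogonality and hence the constant $1$ there, while the ``bad set'' contributes $\big(\int\chi_{\Omega(\tilde Y)^c}(\pi\bx(\xi))\,d\Omega\big)^{1/2}\les c$ pointwise in $\xi$ after averaging in $\Omega$ --- this is where the $O(c)$ comes from, and it is uniform in $f$ precisely because the boundary location is randomized rather than fixed. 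On the spatial side a fixed smooth partition does suffice, but only because $\eta^{x_0}\geq 0$ and $\sum_{x_0}\eta^{x_0}=1$ allow the pointwise bound $\sum_{q_0}(\sum_{x_0}m_{q_0,T}\eta^{x_0})^2\leq(\sum_{x_0}\eta^{x_0})^2=1$; this positivity trick is unavailable for the frequency localization, which acts by convolution. You would need to replace your frequency partition with the averaged sharp projectors (or an equivalent randomization of the cell boundaries) to close \eqref{q00e}.
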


The above construction was provided for the case of conic surfaces (single-conic in our context) in \cite{Tao-BW}, and for more general 
surfaces in \cite{Be2}. We sketch the main parts of the argument. We start with the  partition
\[
\tilde U'= \bigcup_{\xi \in \tilde{\mathcal{L}}} A_\xi
\]
where $A_\xi$ consists of the points in $\tilde U'$ that are closer to $\xi$ than any other elements of $\tilde{\mathcal{L}}$. 
Therefore $A_\xi$ belongs to the $O(r^{-1})$ neighborhood of $\xi$. 

Let $G$ be the set of all translations in $\R^{n-2}$ by vectors of size at most $O(r^{-1})$; in particular these translations
 differ from identity by $O(r^{-1})$. Let $d \Omega$
be a smooth compactly supported probability measure on the interior of $G$. 
For each $\Omega \in G$ and $\xi_0 \in \mathcal{L}$, we define the Fourier projectors by
\[
\mathcal{F}(P_{\Omega,\xi_0} g)(\xi) = \chi_{\Omega(A_{\pi \circ \bx(\xi_0)})} (\pi \circ \bx(\xi)) \hat g (\xi). 
\]
For fixed $\Omega \in G$, this leads to the decomposition:
\begin{equation} \label{Fdec}
g = \sum_{\xi_0 \in \mathcal{L}} P_{\Omega,\xi_0} g. 
\end{equation}
The terms above have good frequency support: $P_{\Omega,\xi_0} g$ is localized in frequency in a neighborhood of size 
$C r^{-1}$ of $U(\xi_0)$; indeed, this follows from the definition of the sets $A_\xi$, the properties of $\Omega$, the smoothness
properties of $\bx$ and the fact that $\pi \circ \bx (U(\xi_T))= \pi \circ \bx (\xi_T)$. 

We note the following: the sole reason for bringing in the additional coordinate map $\bx$ into the picture was to accommodate the simple group
structure $G$ used above; this is very easy in the context of the fibers $\bx(U_\alpha)$, or more precisely their projections $\pi \circ \bx(U_\alpha)$ as one simply uses the translations on $\R^{n-2}$. At the original level of the fibers $U_\alpha$, this would be more difficult, but probably doable. For instance, for the classical conical model \eqref{model}, one assigns to each leaf $U_\alpha$ an angle $\omega_\alpha \in \S^{n-2}$ and can use the rotations in $SO(n)$
that leave the $\S^{n-2}$ invariant to construct $G$. This is essentially the construction used in \cite{Tao-BW}. 

Now we proceed with the spatial localization. For each $x_0 \in L$, define
\[
\eta^{x_0}(x) = \eta_0 (\frac{c^2}r (x-x_0))
\]
and notice that, by the Poisson summation formula and properties of $\eta_0$, 
\begin{equation} \label{pois2}
\sum_{x_0 \in L} \eta^{x_0}=1.
\end{equation}
In Section \ref{FW} we interpreted $\phi=\calE f$ as an evolution with initial data $\phi(0)=\check f$. Based on this, we define 
\[
\phi_T(0)= \eta^{x_T}(x) \int P_{\Omega,\xi_T} \phi(0) d \Omega
\]
and evolve this, at all other times, by the free flow
\[
\phi_T(t)=e^{it\varphi(D)} \phi_T(0). 
\]
The statements \eqref{lind}-\eqref{qest} are standard in the wave packet theory, and they rely on the phase-space localization properties of the
initial data $\phi_T(0)$ and the properties of the flow $e^{it\varphi(D)}$; the properties of the flow $e^{it\varphi(D)}$ are closely related to the geometry 
of the characteristic surface $S$ and in our particular case the key property is that in the support of $\hat \phi_T(0)$ we have 
$| N(\Sigma(\xi))-N(\Sigma(\xi'))| \les r^{-1}$ or, in terms of phase function, $|\nabla \varphi(\xi)- \nabla \varphi(\xi')| \les r^{-1}$. This guarantees 
that the wave $\phi_T$ is highly localizes in the tube $T$. The details can be filled in using similar arguments to those in \cite[Lemma 4.1]{Be2} or 
\cite[Lemma 15.2]{Tao-BW}. 

Given that a statement of type \eqref{q00e} is less common in a wave packet decomposition,  we include a proof of it for convenience. 
This proof follows closely the one in \cite[Lemma 15.2]{Tao-BW}. By using the unitarity of $e^{it\varphi(D)}$ on $L^2$, \eqref{q00e} is reduced to the corresponding statement at time $t=0$. We denote $\phi(0)=g$ and the left hand-side of \eqref{q00e} becomes
\[
\left( \sum_{q_0} \| \int \sum_{\xi_0 \in \calL} \sum_{x_0 \in L} m_{q_0,T(x_0,\xi_0)} \eta^{x_0} P_{\Omega,\xi_0} g  d \Omega\|_{L^2}^2  \right)^\frac12
\]
 which is obviously bounded by
\begin{equation} \label{aux10}
\int \left( \sum_{q_0} \| \sum_{\xi_0 \in \calL} \sum_{x_0 \in L} m_{q_0,T(x_0,\xi_0)} \eta^{x_0} P_{\Omega,\xi_0} g  \|_{L^2}^2  \right)^\frac12 d \Omega.
\end{equation}
We define the set
\[
\tilde Y:= \bigcup_{\xi_0 \in \tilde \calL} \{ \xi \in A_{\xi_0}: d(\xi, A^c_{\xi_0}) > Cc^2 r^{-1}  \}
\]
and the associated multiplier $P_{\Omega(\tilde Y)}$ with symbol $\chi_{\Omega(\tilde Y)} \circ \pi \circ \bx (\xi)$, that is
\[
\mathcal{F} (P_{\Omega(\tilde Y)} h )= \chi_{\Omega(\tilde Y)} ( \pi ( \bx (\xi) )) \hat h(\xi). 
\]
Using the triangle inequality we bound the term \eqref{aux10} by
\begin{equation} \label{aux11}
\int \left( \sum_{q_0} \| \sum_{\xi_0 \in \calL} \sum_{x_0 \in L} m_{q_0,T(x_0,\xi_0)} \eta^{x_0}  P_{\Omega,\xi_0} P_{\Omega(\tilde Y)} g  \|_{L^2}^2  \right)^\frac12 d \Omega
\end{equation}
and 
\begin{equation} \label{aux12}
\int \left( \sum_{q_0} \| \sum_{\xi_0 \in \calL} \sum_{x_0 \in L} m_{q_0,T(x_0,\xi_0)} \eta^{x_0} P_{\Omega,\xi_0} (1-P_{\Omega(\tilde Y)}) g  \|_{L^2}^2  \right)^\frac12 d \Omega
\end{equation}
Given $\xi_1, \xi_2 \in \calL$, $P_{\Omega,\xi_1} P_{\Omega(\tilde Y)} g$ and $P_{\Omega,\xi_2} P_{\Omega(\tilde Y)} g$ have Fourier support in the sets 
$\bx^{-1}( \Omega(A_{\pi \bx(\xi_1)} \setminus \tilde Y))$ and  $\bx^{-1}( \Omega(A_{\pi \bx(\xi_2)} \setminus \tilde Y))$, respectively. 
The two sets $ \Omega(A_{\pi \bx(\xi_1)} \setminus \tilde Y)$ and $ \Omega(A_{\pi \bx(\xi_2)} \setminus \tilde Y)$ are at distance $\geq Cc^2 r^{-1}$ and the map $\bx^{-1}$ may change this distance by a factor, that is the distance between the sets $\bx^{-1}( \Omega(A_{\pi \bx(\xi_1)} \setminus \tilde Y))$ and  $\bx^{-1}( \Omega(A_{\pi \bx(\xi_2)} \setminus \tilde Y))$ is $\geq Cc^2 r^{-1}$, after redefining $C$. 

The functions $\eta^{x_0}$ have Fourier support in the set $|\xi| \leq c^2 r^{-1}$, thus the Fourier supports of 
$\eta^{x_1} P_{\Omega,\xi_1} P_{\Omega(\tilde Y)} g$ and $\eta^{x_2} P_{\Omega,\xi_2} P_{\Omega(\tilde Y)} g$ are also at distance 
$\geq Cc^2 r^{-1}$; recall that $\mathcal{F}(\eta^{x_1} P_{\Omega,\xi_1} P_{\Omega(\tilde Y)} g) = \hat \eta^{x_1} * \mathcal{F}(P_{\Omega,\xi_1} P_{\Omega(\tilde Y)} g)$. The conclusion to draw from this is that if $\xi_1 \ne \xi_2$, then $\eta^{x_1} P_{\Omega,\xi_1} P_{\Omega(\tilde Y)} g$ and $\eta^{x_2} P_{\Omega,\xi_2} P_{\Omega(\tilde Y)} g$ have disjoint Fourier support. As a consequence, the term in \eqref{aux11} equals
\[
\begin{split}
= & \int \left( \sum_{q_0} \sum_{\xi_0 \in \calL} \|  \sum_{x_0 \in L} m_{q_0,T(x_0,\xi_0)} \eta^{x_0}  P_{\Omega,\xi_0} P_{\Omega(\tilde Y)} g  \|_{L^2}^2  \right)^\frac12 d \Omega \\
= &  \int \left(  \sum_{\xi_0 \in \calL} \int |P_{\Omega,\xi_0} P_{\Omega(\tilde Y)} g (x)|^2  \sum_{q_0} \left( \sum_{x_0 \in L} m_{q_0,T(x_0,\xi_0)} \eta^{x_0} (x)\right)^2 dx   \right)^\frac12 d \Omega \\
\leq &  \int \left(  \sum_{\xi_0 \in \calL} \int |P_{\Omega,\xi_0} P_{\Omega(\tilde Y)} g (x)|^2  \left( \sum_{q_0}  \sum_{x_0 \in L} m_{q_0,T(x_0,\xi_0)} \eta^{x_0} (x)\right)^2 dx   \right)^\frac12 d \Omega \\
\end{split}
\]
 Using \eqref{q0e} and \eqref{pois2} we can continue with the 
 \[
 = \int \left(  \sum_{\xi_0 \in \calL} \int |P_{\Omega,\xi_0} P_{\Omega(\tilde Y)} g (x)|^2  dx   \right)^\frac12 d \Omega 
 \leq \int \| P_{\Omega(\tilde Y)} g \|_{L^2} d \Omega \leq \| g \|_{L^2},
 \]
  where in passing to the second inequality we have used the orthogonality of the projectors $P_{\Omega,\xi_0}$ with respect to $\xi_0$. 
 
 We use a similar argument for dealing with the term in \eqref{aux12}, except that now we invoke only almost orthogonality arguments, and not full orthogonality; therefore we can bound the term in \eqref{aux12} by
 \[
 \les \int \| (1-P_{\Omega(\tilde Y)}) g \|_{L^2} d \Omega.
 \]
  We use the Cauchy-Schwartz inequality and Plancherel to continue the sequence of bounds,
  \[
  \begin{split}
  \les & \left( \int \| (1-P_{\Omega(\tilde Y)}) g \|^2_{L^2} d \Omega \right)^\frac12 \\
  = &  \left( \int \| (1-\chi_{\Omega(\tilde Y)} ( \pi \bx (\xi))) \hat g(\xi) \|^2_{L^2} d \Omega \right)^\frac12 \\
  = &  \left( \int |\hat g(\xi)|^2 \int  (\chi_{\Omega(\tilde Y)^c}(\pi \bx(\xi)))^2 d \Omega d \xi \right)^\frac12 \\
  \end{split}
  \]
 For fixed $\xi$, the integral with respect to $\Omega$ is $\les c^2$, thus the final bound is 
 \[
 \les c \| g \|_{L^2}. 
 \]
 Combining the two bounds we derived for \eqref{aux11} and \eqref{aux12} leads to \eqref{q00e}. 
  
\section{Localization of the multilinear estimate} \label{LocME}
In \cite{Be1} we established a refinement of the generic trilinear restriction estimate under the additional assumption of small support of one of the terms involved, see \eqref{MEC} below. In this section we provide a slight modification of this result which is needed for technical reasons. 

We recall the setup from \cite{Be1}. We are given $3$ smooth hypersurfaces $S_i = \Sigma_i(U_i)$ with smooth parameterizations 
$\Sigma_i$. We assume that there exists $\nu >0$ such that
\begin{equation} \label{normal2}
vol (N_1(\zeta_1), N_{2}(\zeta_{2}), N_{3}(\zeta_{3})) \geq \nu,
\end{equation}
for all choices $\zeta_i \in \Sigma_i(U_i)$. Here by $vol (N_1(\zeta_1), N_{2}(\zeta_{2}), N_{3}(\zeta_{3}))$ we mean the volume of the $3$-dimensional parallelepiped spanned by the vectors $N_1(\zeta_1), N_{2}(\zeta_{2}), N_{3}(\zeta_{3})$. 

Assume that $\Sigma_1 (supp f_1) \subset  B(\calH,\mu)$, where $B(\calH,\mu)$ is the neighborhood of size $\mu$ of the $3$-dimensional affine subspace $\calH$. Assume that $|N_1(\zeta_1)- \pi_{\calH} N_1(\zeta_1)| \les \mu, \forall \zeta_1 \in \Sigma_1 (supp f_1)$,
where $\pi_{\calH} : \R^{n+1} \rightarrow \calH$ is the projection onto $\calH$. In addition assume that if $N_{i}, i=4,..,n+1$ is a basis of the normal space $\calH^\perp$ to $\calH$, then $N_1(\zeta_1),.., N_3(\zeta_3), N_4,..,N_{n+1}$ are transversal 
in the following sense:
\begin{equation} \label{normal}
| det(N_1(\zeta_1), N_{2}(\zeta_{2}), N_{3}(\zeta_{3}), N_{4},.., N_{n+1})| \geq \nu,
\end{equation}
for all choices $\zeta_i \in \Sigma_i(U_i)$. 

Under these hypothesis we proved that
\begin{equation} \label{MEC}
\| \calE_1  f_1  \calE_2 f_2  \calE_3 f_3 \|_{L^1(B(0,r))} \leq C(\epsilon) \mu^{\frac{n-2}2} r^\epsilon 
 \| f_{1} \|_{L^2(U_{1})} \| f_2 \|_{L^2(U_2)} \| f_3 \|_{L^2(U_3)}.
\end{equation}
In this section we show that this estimate localizes in the following sense: assume $q$ is a cube of size $r$ and that $\mu \ges r^{-1}$ then
\begin{equation} \label{MECL}
\begin{split}
 \| \calE_1  f_{1}  \calE_2 f_2  \calE_3 f_3  \|_{L^1(q)}
\leq  C(\epsilon) r^\epsilon \mu^{\frac{n-2}2} r^{-\frac32} \Pi_{i=1}^{3} \| \tilde \chi_q \calE_i f_i \|_{L^2}
\end{split}
\end{equation}
There are few aspects in the above estimate that need to be highlighted. Switching from balls of radius $r$ to cubes of radius $r$
makes no difference. The estimate says that inside $q$ the main contributions come from
$\calE_i f_i$ inside a dilate of $q$. This is simply a consequence of the finite speed of propagation.  The factor $r^{-\frac32} $ exhibits 
an apparent improvement over \eqref{MEC}. This is explained by the fact that while in \eqref{MEC}, the norms of $\calE_i f_i$
are measured along hyperplanes, in \eqref{MECL} the norms of $\calE_i f_i$ are measured on cubes of size $r$ and the additional dimension explains the additional factors. Finally, the condition $\mu \ges r^{-1}$ is crucial in using the localization on the physical side,
without altering the localization at scale $\les \mu$ of $\Sigma_1 (supp f_1)$. 

To prove \eqref{MECL}, we need to invoke the localization machinery developed in \cite{Be1}. For each $i=1,2,3$ we fix $\zeta_i^0 \in \Sigma_i(supp f_i)$, $N_i=N_i(\zeta_i^0)$ and let $\calH_i$ be the hyperplane on the physical side passing through the origin with normal $N_i$. We denote by $\pi_{N_i}$ the projection onto $\calH_i$ along $N_i$. Then, we choose a basis $N_i, i=4,..,n+1$ of the normal plane to $\calH$, let $\calH_i$ be the hyperplane on the physical side passing through the origin with normal $N_i$ and denote by $\pi_{N_i}$ the projection onto $\calH_i$ along $N_i$. The set $\{ N_i \}_{i=1,..,n+1}$ is a basis of $\R^{n+1}$.

In each $\calH_i, i=1,2,3$, we construct the set $\calC \calH_i(r)$ to be the set of $n$-dimensional cubes of size $r$. 
For $y_i \in \R$, we define $\calH_i + y_i N_i$ to be the translate of $\calH_i$ by $y_i N_i$ (in the normal direction).
Then $\calC \calH_i(r) + y_i N_i$ is the corresponding translate of $\calC \calH_i(r)$ by $y_i N_i$. 

For $i \in\{ 1,2,3 \}$, $r> 0$ and for a cube $q \in \calH_i$ of radius $r$, we define
$\chi_q: \calH_i \rightarrow \R$ by
\[
\chi_{q}(x) = \eta_0 (\frac{x-c(q)}r)
\]
Notice that $\mathcal{F}_i \chi_{q}$ has Fourier support in the ball of radius $\les r^{-1}$. This object is very similar to the ones defined in the Notation Section \ref{not}, the only difference is that they localize o cubes living in different linear spaces. By the Poisson summation formula and properties of $\eta_0$, 
\begin{equation} \label{pois}
\sum_{q \in \calC \calH_i(r)} \chi_{q}=1.
\end{equation}
Using the properties of $\chi_q$, a direct exercise shows that for each $N \in \N$, the following holds true
\begin{equation} \label{SN}
\sum_{q \in \calC \calH_i(r)}  \| \la \frac{x-c(q)}r \ra^{N} \chi_{q} g \|_{L^2}^2 \les_N \| g \|_{L^2}^2
\end{equation}
for any $g \in L^2(\calH_i)$. Here, the variable $x$ is the argument of $g$ and belongs to $\calH_i$.

Now we can start the argument of for \eqref{MECL}. Given any vector $y \in \R^{n+1}$ with $|y_i- c_i(q)| \leq r, i=1,2,3$ and $y_i=c_i(q), 4 \leq i \leq n+1$, we claim the following:
\begin{equation} \label{INS}
\begin{split}
& \| \Pi_{i=1}^{3} \calE_i f_i \|_{L^1(q)} \les C(\epsilon) r^\epsilon \mu^{\frac{n-2}2} \\
  &  \cdot \Pi_{i=1}^{3}  \left( \sum_{q' \in \calC \calH_i(r)+y_i N_i} \la \frac{d(\pi_{N_i} q,q')}r \ra^{-N} \| \la \frac{x-c(q')}r \ra^{N} \chi_{q'} \calE_i f_i \|_{L^2(\calH_i+y_i N_i)}^2 \right)^\frac12
\end{split}
\end{equation}
This allows us to average over the values of $(y_1,y_2,y_3)$ 
satisfying $|y_i- c_i(q)| \leq r$ (keeping $y_i=c_i(q), i \geq 4$) to further bound
\[
\begin{split}
& \| \Pi_{i=1}^{3} \calE_i f_i \|_{L^1(q)} \les C(\epsilon) r^\epsilon \mu^{\frac{n-2}2} r^{-\frac32} \\
  &  \cdot \Pi_{i=1}^{3}  \left( \int_{|y_i| \leq r} \sum_{q' \in \calC \calH_i(r)+y_i N_i} \la \frac{d(\pi_{N_i} q,q')}r \ra^{-N} \| \la \frac{x-c(q')}r \ra^{N} \chi_{q'} \calE_i f_i \|_{L^2(\calH_i+y_i N_i)}^2 \right)^\frac12 \\
  & \les  C(\epsilon) r^\epsilon \mu^{\frac{n-2}2} r^{-\frac32} \Pi_{i=1}^{3} \| \tilde \chi_q \calE_i f_i \|_{L^2}
\end{split}
\]
This gives the estimate \eqref{MECL}. All that is left is the justification of \eqref{INS}. We note that this type of a posteriori  
improvement of the multilinear estimate was at the core of the arguments in \cite{Be1} and that, while \eqref{INS} is not explicitly 
established there, its proof is essentially derived along the same lines as $(3.6)$ in that paper. 

We first note that, without restricting the generality of the argument we can assume that $c(q)=0$. Following the argument below, it will
be clear that the proof for $y=0$ extends to the more general case when $|y_i| \leq r, i=1,2,3$.

We fix $i=1$. In \cite{Be1} we explained that the problem is reducible to the case when $\Sigma_1:U_1 \subset \calH_1 \rightarrow \R^{n+1}$ with $\Sigma_1(\xi') =(\xi', \varphi_1(\xi'))$ and  $supp f_1 \subset B(\calH \cap \calH_1, \mu)$,
the $\mu$ neighborhood of $\calH \cap \calH_1 \subset \calH_1$. Then we have the representation
\begin{equation} \label{E1}
\calE_1 f (x_1,x')= \int_{U_1} e^{i (x' \xi' + x_1 \varphi_1(\xi'))} f(\xi') d\xi'.
\end{equation}
 We highlight a commutator estimate which is needed due to the uncertainty principle. 
For any fixed $x_0 \in \R^{n+1}$, it holds true that
\begin{equation} \label{com}
(x'-x'_0-x_1 \nabla \varphi_1(\frac{D'}i))^N \calE_1 f_1 =  \calE_1 (\mathcal{F}_1( (x'-x'_0)^N  \mathcal{F}_1^{-1} f_1)), \quad \forall N \in \N. 
\end{equation}
This is a direct computation using \eqref{E1} and it suffices to check it for $N=1$. 

Fix $q' \in \calC \calH_1(r)+y$. With the notation $A=C(\epsilon) \mu^{\frac{n-2}2} r^\epsilon$ we have
\[
\begin{split}
& \| (x'- c(q')-x_1 \nabla \varphi_1(\xi'_0)) \calE_1 \mathcal{F}_1 ( \chi_{q'}  \mathcal{F}_1^{-1} f_1) \cdot \Pi_{i=2}^{3}  \calE_i f_i \|_{L^{1}(q)} \\
= & \| (x'- c(q')-x_1 \nabla \varphi_1(\xi')) \calE_1 \mathcal{F}_1 ( \chi_{q'} \mathcal{F}_1^{-1} f_1) \cdot \Pi_{i=2}^{3}  \calE_i f_i  \|_{L^1(q)} \\
+ & \| x_1 ( \nabla \varphi_1(\xi'_0) - \nabla \varphi_1(\xi')) \calE_1 \mathcal{F}_1 ( \chi_{q'} \mathcal{F}_1^{-1} f_1) \cdot \Pi_{i=2}^{3}  \calE_i f_i  \|_{L^1(q)} \\
= & \|  \calE_1 \mathcal{F}_1 ( (x'-c(q'))  \chi_{q'} \mathcal{F}_1^{-1} f_1) \cdot \Pi_{i=2}^{3}  \calE_i f_i  \|_{L^1(q)} \\
+ & \| x_1  \calE_1 \mathcal{F}_1 ( ( \nabla \varphi_1(\xi'_0) - \nabla \varphi_1(\xi')) \chi_{q'} \mathcal{F}_1^{-1} f_1) \cdot \Pi_{i=2}^{3}  \calE_i f_i  \|_{L^1(q)} \\
\leq & A \left(  \| (x'-c(q')) \chi_{q'} \mathcal{F}_1^{-1} f_1 \|_{L^2} +  r \| (\nabla \varphi_1(\xi'_0) - \nabla \varphi_1(\xi')) \chi_{q'} \mathcal{F}_1^{-1} f_1 \|_{L^2} \right) \Pi_{i=2}^{3} \| f_i \|_{L^2}  \\
\les & A \left(  \| (x'-c(q')) \chi_{q'} \mathcal{F}_1^{-1} f_1 \|_{L^2} + r \| \chi_{q'} \mathcal{F}_1^{-1} f_1 \|_{L^2} \right)  \Pi_{i=2}^{3} \| f_i \|_{L^2} \\
\les & r A \| \la \frac{x'-c(q')}{r} \ra \chi_{q'} \mathcal{F}_1^{-1} f_1 \|_{L^2}  \Pi_{i=2}^{3} \| f_i \|_{L^2}
\end{split}
\]
We have used the following: \eqref{com} in justifying the equality between the terms on the second and fourth line, the induction hypothesis and the fact that inside $Q$ we have $|x_1| \les r$ to justify the inequality in the sixth line 
(note that this part extends easily for more general $y_1$ provided $|y_1| \leq r$). Since $\chi_{q'}$ has Fourier support in a ball of radius $\les r^{-1} \leq \mu$, $\calE_1 \mathcal{F}_1 ( (x'-c(q'))  \chi^0_{q'} \mathcal{F}_1^{-1} f_1)$ keeps the localization property of $f_1$, that is 
$supp \mathcal{F}_1 ( (x'-c(q'))  \chi^0_{q'} \mathcal{F}_1^{-1} f_1) \subset B(\calH \cap \calH_1, C \mu)$ for some fixed constant $C$.

For $x' \in \pi_{N_1}(q)$, it holds that 
$\la \frac{x'- c(q')-x_1 \nabla \varphi_1(\xi'_0)}r \ra \approx \la \frac{d(\pi_{N_1}(q),q')}r \ra$. 
This is justified by the fact that $|x_1| \les r$ and $|\nabla \varphi_1(\xi'_0)| \leq 1$, 
therefore the contribution of $|x_1 \nabla \varphi_1(\xi'_0)| \les r$ is negligible. From this and the previous set of estimates,
we conclude that
\[
 \| \calE_1 \mathcal{F}_1 ( \chi_{q'} \mathcal{F}_1^{-1} f_1) \cdot \Pi_{i=2}^{3}  \calE_i f_i \|_{L^1(q)} \les A
  \la  \frac{d(\pi_{N_1}q,q')}{r} \ra^{-1} \| \la \frac{x'-c(q')}{r} \ra \chi_{q'} \mathcal{F}_1^{-1} f_1 \|_{L^2}  \Pi_{i=2}^{3} \| f_i \|_{L^2}
\]
Repeating the argument gives
\[
 \| \calE_1 \mathcal{F}_1 ( \chi_{q'} \mathcal{F}_1^{-1} f_1) \cdot \Pi_{i=2}^{3}  \calE_i f_i \|_{L^1(q)} \les_N A
  \la  \frac{d(\pi_{N_1}q,q')}{r} \ra^{-N} \| \la \frac{x'-c(q')}{r} \ra^N \chi_{q'} \mathcal{F}_1^{-1} f_1 \|_{L^2}  \Pi_{i=2}^{3} \| f_i \|_{L^2}
\]

Using \eqref{pois} and the above, we obtain
\[
\begin{split}
& \| \calE_1 f_1 \cdot \Pi_{i=2}^{n+1}  \calE_i f_i \|_{L^1(q)} \\
 \leq  & A
\sum_{q' \in \calC \calH_1(r)} \| \calE_1 \mathcal{F}_1 ( \chi_{q'} \mathcal{F}_1^{-1} f_1) \cdot \Pi_{i=2}^{3}  \calE_i f_i \|_{L^1(q)} \\
 \les_N & A \left( \sum_{q' \in \calC \calH_{1}(r)} \la  \frac{d(\pi_{N_1}q,q')}{r} \ra^{-N} \| \la \frac{x'-c(q')}{r} \ra^N \chi_{q'} \mathcal{F}_1^{-1} f_1 \|_{L^2} \right)  \Pi_{i=2}^{3} \| f_i \|_{L^2} \\
 \les_N & A \left( \sum_{q' \in \calC \calH_{1}(r)} \la  \frac{d(\pi_{N_1}q,q')}{r} \ra^{-(2N-4)} \| \la \frac{x'-c(q')}{r} \ra^N \chi_{q'} \mathcal{F}_1^{-1} f_1 \|_{L^2}^2\right)^\frac12  \Pi_{i=2}^{3} \| f_i \|_{L^2}.
\end{split}
\]
This is the improvement we claimed in \eqref{INS} for $\calE_1 f_1$. The improvements for the $\calE_2 f_2$ and $\calE_3 f_3$
are obtained in a similar manner.

\section{Table construction and the induction argument} \label{SI}

This section contains the main argument for the proof of Theorem \ref{mainT}. In Proposition \ref{NLP}
we construct tables on cubes: this is a way of re-organizing the information on one term, say $\phi_1$, at smaller scales
based on information from one of the other interacting terms, $\phi_2$ or $\phi_3$. This type of argument is inspired by the work on the conic surfaces of Tao in \cite{Tao-BW}. Based on this table construction, we will prove the inductive bound claimed in Proposition \ref{keyP}. 

\begin{prop} \label{NLP}
Let $Q$ be a cube of size $R \gg 2^{2C_0}$.
Assume $\phi_i=\calE_i f_i, i \in \{1,2,3\}$ have positive margin.  
Then there is a table $\Phi_1=\Phi_c(\phi_1, \phi_2,Q)$ with depth $C_0$
such that the following properties hold true: 
\begin{equation} \label{dec}
\phi_1= \sum_{q \in  \calQ_{C_0}(Q)} \Phi_1^{(q)}, 
\end{equation}
\begin{equation} \label{MPhi2}
margin(\Phi) \geq margin(\phi)-C R^{-\frac12}.
\end{equation}

\begin{equation} \label{PhiM2}
M(\Phi) \leq (1+cC) M(\phi),
\end{equation}
and for any $q',q'' \in \calQ_{C_0}(Q), q'\ne q''$
\begin{equation} \label{Phia2}
\| \Phi_1^{(q')} \phi_2 \phi_3 \|_{L^1((1-c)q'')} \ls_\epsilon c^{-C} R^{-\frac{n-2}4+\epsilon}  \Pi_{i=1}^3 M^\frac12(\phi_i).
\end{equation}

\end{prop}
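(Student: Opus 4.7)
My plan is to build the table $\Phi_1$ by redistributing a wave packet decomposition of $\phi_1$ across the sub-cubes at depth $C_0$, using weights selected with the help of $\phi_2$. First I apply Lemma \ref{LeWP} to $\phi_1$ at scale $r \approx R^{1/2}$ to obtain $\phi_1 = \sum_{T \in \calT} \phi_{1,T}$, where each packet is essentially concentrated in a tube $T$ of radius $c^{-2} R^{1/2}$ and has Fourier support in an $O(R^{-1/2})$-neighborhood of a 2D leaf $S_1(\xi_T)$. For each $T$ I then choose nonnegative weights $(m_{q,T})_{q \in \calQ_{C_0}(Q)}$ with $\sum_q m_{q,T} = 1$, designed so that $m_{q,T}$ is concentrated on the sub-cubes through which the tube $T$ actually passes; a natural choice is weights proportional to $\tilde\chi_T(x_q, t_q)^N$, possibly modulated by a $\phi_2$-dependent factor such as $\|\tilde\chi_q \phi_2\|_{L^2}^{2}$. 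The table is then $\Phi_1^{(q)} := \sum_T m_{q,T} \phi_{1,T}$, for which \eqref{dec} follows from $\sum_q m_{q,T} = 1$ and \eqref{lind}, the margin bound \eqref{MPhi2} is inherited from the margin property of individual wave packets, and the mass bound \eqref{PhiM2} is a direct application of \eqref{q00e}.

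The heart of the argument is \eqref{Phia2}. For distinct sub-cubes $q', q''$, the triangle inequality reduces the task to bounding
\[
\sum_T m_{q',T}\, \|\phi_{1,T} \phi_2 \phi_3\|_{L^1((1-c)q'')}.
\]
To each term I apply the localized trilinear restriction estimate \eqref{MECL} with $r_\circ = R/2^{C_0}$ and $\mu \approx R^{-1/2}$. The hypothesis is verified by taking $\calH$ to be the 3-dimensional affine subspace spanned by the 2D leaf associated with $\xi_T$ and the normal $N_1(\xi_T)$, so that $N_1(\xi_T) \in \calH$ automatically, and the transversality condition \eqref{normal} follows from hypothesis (iii) once the basis $N_4, \ldots, N_{n+1}$ of $\calH^\perp$ is built (after orthonormalization) from $S_{N_1(\xi_T)}$ applied to an orthonormal basis of $(T S_{1,\alpha})^\perp$. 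This yields
\[
\|\phi_{1,T} \phi_2 \phi_3\|_{L^1((1-c)q'')} \les_\epsilon R^\epsilon R^{-(n-2)/4} R^{-3/2} \|\tilde\chi_{q''} \phi_{1,T}\|_{L^2} \prod_{i=2,3} \|\tilde\chi_{q''} \phi_i\|_{L^2},
\]
and mass conservation supplies $\|\tilde\chi_{q''} \phi_i\|_{L^2} \les R^{1/2} M(\phi_i)^{1/2}$ for $i=2,3$. The remaining sum $\sum_T m_{q',T} \|\tilde\chi_{q''} \phi_{1,T}\|_{L^2}$ is estimated by Cauchy--Schwarz with weight $\tilde\chi_T(x_{q''}, t_{q''})^{N/2}$: one factor is $(\sum_T m_{q',T}^2\, \tilde\chi_T(x_{q''}, t_{q''})^N)^{1/2}$, which I expect to be $O(c^{-C})$ because distinct sub-cubes at depth $C_0$ are separated by $\gtrsim R$ while the tube radius is only $c^{-2} R^{1/2}$; the complementary factor is controlled by the enlargement of \eqref{qest} applied at the scale $r \approx R^{1/2}$ and yields $(c^{-C} R M(\phi_1))^{1/2}$. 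Collecting the factors produces the required $R^{-(n-2)/4 + \epsilon}$ bound.

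The main obstacle is calibrating the weights $m_{q,T}$ so that the competing requirements hold simultaneously: they must form a partition of unity in $q$ (to preserve mass via \eqref{q00e}) while decaying sharply enough away from the sub-cubes the tube genuinely passes through so that $\sum_T m_{q',T}^2\, \tilde\chi_T(x_{q''}, t_{q''})^N$ stays bounded uniformly in $q' \neq q''$. Geometrically, a tube of radius $R^{1/2}$ cannot stay close to two sub-cube centers separated by $\gtrsim R$ unless its direction is very special; making this quantitative, with the correct $c^{-C}$ budget, is where the details must be carried out carefully, and is also where the freedom to let the weights depend on $\phi_2$ becomes most useful in preparing the table for the subsequent inductive step.
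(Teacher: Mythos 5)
Your construction of the table itself matches the paper: wave packets for $\phi_1$ at scale $r\approx R^{1/2}$, weights $m_{q,T}\geq 0$ with $\sum_q m_{q,T}=1$, the decomposition \eqref{dec} from \eqref{lind}, the margin bound from the packet margins, and \eqref{PhiM2} from \eqref{q00e}. (The paper commits to the specific choice $m_{q_0,T}:=\|\tilde\chi_{T}\phi_2\|^2_{L^2(q_0)}$ normalized by $m_T=\sum_{q_0}m_{q_0,T}$; this is not a cosmetic choice, as explained below.)

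The proof of \eqref{Phia2}, however, has a genuine gap at the step you yourself flag as delicate. Your single Cauchy--Schwarz at the coarse scale requires $\sum_T m_{q',T}^2\,\tilde\chi_T(x_{q''},t_{q''})^N=O(c^{-C})$, and the justification offered --- that $q'$ and $q''$ are separated by $\gtrsim R$ while tubes have radius $c^{-2}R^{1/2}$ --- does not give this. Through any fixed point of $q''$ there pass on the order of $R^{(n-2)/2}$ tubes of $\calT_1$ (one for each $R^{-1/2}$-separated direction $\xi_T\in\calL$), and a positive proportion of these also meet $q'$; the separation of the cubes constrains the directions only up to an $O(1)$ solid angle. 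Since nothing forces the weights to be small on this large family, the factor can be as large as $R^{(n-2)/2}$, which destroys the bound. This is precisely why the paper (a) chooses the weights to be the local mass of $\phi_2$ along $T$, so that the weighted tube count through a \emph{fine} cube $q\in\calQ_j(Q)$ collapses, via \eqref{keyw}, to $\|\tilde\chi_{S(q)}\phi_2\|_{L^2}^2$ where $S(q)$ is an $r$-neighborhood of the translated cone of normals of $S_1$; (b) applies \eqref{MECL} on the fine cubes $q$ rather than on $q''$ and must then sum over $\sim R^{(n+1)/2}$ such cubes without loss; and (c) closes that summation with the two square-function estimates \eqref{mes} and \eqref{mes2}, of which \eqref{mes} is a genuinely bilinear combinatorial estimate on pairs of wave packets of $\phi_2$ and $\phi_3$ whose proof of bounded multiplicity invokes Lemma \ref{GL}, i.e.\ the transversality of $dspan\,\calN_1$ with $\calN_2,\calN_3$. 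This combinatorial core --- the place where the curvature hypotheses i)--iii) actually enter the proposition --- is absent from your argument, so the proposal as written does not establish \eqref{Phia2}.
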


\begin{rem} \label{RNLP}
The above result is stated for scalar $\phi_1,\phi_2,\phi_3$, but it holds for vector versions as well. Most important 
is that we can construct $\Phi_1=\Phi_c(\phi_1, \Phi_2, Q)$ where $\Phi_2$ is a vector free wave and all its scalar components 
satisfy similar properties to the $\phi_2$ above. 
\end{rem}

\begin{rem} \label{RNLP1}
We note that $\Phi_1=\Phi_c(\phi_1, \phi_2,Q)$ means that the table  $\Phi_1$ is constructed from $\phi_1$, which is natural
in light of \eqref{dec}, and $\phi_2$. But it does not depend on $\phi_3$. Obviously, we could have constructed it from $\phi_1$
and $\phi_3$, ending with a different object. 
\end{rem}

\begin{proof}  There are several scales involved in this argument. The large scale is the size $R$ of the cube
$Q$. The coarse scale is $2^{-C_0} R \gg R^\frac12$, this being the size of the smaller cubes
in $\calQ_{C_0}(Q)$ and the subject of the claims in the Proposition. Then there is the fine scale $r=2^{-j}R$ chosen such that
 $r \approx R^\frac12$. Notice that $r$ is the proper scale for wave packets corresponding to time scales $R$ and also
 that their scale is $c^{-2} r \ll 2^{-C_0} R$, last one being the scale of cubes in $\calQ_{C_0}(Q)$. 
 
We use Lemma \ref{LeWP} with $J=j$ to construct the wave packet decomposition for $\phi_1$,
\[
\phi_1 = \sum_{T_1 \in \calT_1} \phi_{1,T_1}. 
\]
 For any $q_0 \in \calQ_{C_0}(Q)$ and $T_1 \in \calT_1$ we define
\[
m_{q_0,T_1}:= \|  \tilde{\chi}_{T_1} \phi_2 \|^2_{L^2(q_0)}
\]
and 
\[
m_{T_1}:= \sum_{q_0 \in \calQ_{C_0}(Q)} m_{q_0, T_1}.
\]
Based on this we define
\begin{equation} \label{dPq0}
\Phi_1^{(q_0)}:= \sum_{T_1 \in \calT_1} \frac{m_{q_0,T}}{m_{T_1}} \phi_{1,T_1}. 
\end{equation}
By combing the definitions above with the decomposition property \eqref{lind}, we obtain 
\[
\phi_1= \sum_{q_0 \in \calQ_{C_0}(Q)} \Phi_1^{(q_0)}
\]
thus justifying \eqref{dec}. 

The margin estimate \eqref{MPhi2} follows from the margin estimate on tubes provided by Lemma \ref{LeWP}. 
The coefficients $m_{q_0,T}$ satisfy \eqref{q0e}, thus the estimate \eqref{PhiM2} follows from 
 \eqref{q00e}. 

All that is left to prove is \eqref{Phia2}, which is equivalent to 
\begin{equation} \label{red1}
 \sum_{q \in \calQ_j(Q):d(q,q_0) \ges cR} \| \Phi^{(q_0)}_1  \phi_2 \phi_3 \|_{L^1(q)} 
\ls_\epsilon c^{-C} r^{-\frac{n-2}2+\epsilon} \Pi_{i=1}^3 M(\phi_i).
\end{equation}

Note that the cubes $q$ are selected at the finer scale dictated the size of cubes in $\calQ_j(Q)$. From the definition of $\Phi^{(q_0)}$ in 
\eqref{dPq0} we can discard the tubes $q$ which do not intersect $4Q$ based on \eqref{ld}, in the sense that their contribution 
to \eqref{red1} will give a better estimate. 

For the tubes intersecting $4Q$, we make another simplification motivated  by \eqref{qest} 
and focus on the tubes which intersect $q$, that is we focus on the following term 
\[
 \sum_{q \in \calQ_j(Q):d(q,q_0) \ges cR}   \|  \sum_{T_1 \cap q \ne \emptyset}  \frac{m_{q_0,T_1}}{m_{T_1}} \phi_{1,T_1} 
 \phi_2 \phi_3 \|_{L^1(q)} 
\]
Essentially \eqref{qest}  says that the other tubes have off-diagonal type contribution, that is there are enough gains in the case 
$T_1 \cap q = \emptyset$ to perform any summation. Note that in order to keep the notation simple, we skipped writing 
$T_1 \in \calT_1$ and will do so throughout the rest of this proof. 

Using the localized form of the trilinear estimate \eqref{MECL}, we obtain
\[
\| \Phi^{(q_0)}_1   \phi_2 \phi_3  \|_{L^1(q)}  \ls_\epsilon r^{-\frac{n+1}2+\epsilon}  \sum_{T_1} \frac{m_{q_0,T_1}}{m_{T_1}} \| \phi_{1,T_1} \tilde \chi_q \|_{L^2}  \| \phi_2 \tilde \chi_q \|_{L^2} \| \phi_3 \tilde \chi_q \|_{L^2} .
\]
Here we used that $\phi_{1,T_1}$ is a free wave of type $\calE_1 g_1$ (for some $g_1$) that is localized in frequency
 in the $\mu=r^{-1}$ neighborhood of the $2$-dimensional affine plane containing the leaf $S_{1, \alpha}$ where we choose the leaf  such that $\Sigma_1(\xi_T) \in S_{1, \alpha}$. If we add to the leaf the normal directions $N_1(\Sigma(\xi_T))$, we obtain a $3$-dimensional affine subspace $\calH$ with the properties required to invoke \eqref{MECL}.

Using the obvious inequality $\frac{m_{q_0,T_1}}{m_{T_1}} \leq \frac{m^\frac12_{q_0,T_1}}{m^\frac12_{T_1}}$, we obtain:
\[
\begin{split}
&  \sum_{ T_1 \cap q \ne \emptyset}  \frac{m_{q_0,T_1}}{m_{T_1}} \| \phi_{1,T_1} \tilde \chi_q \|_{L^2}\\
 \ls & 
\left( \sum_{T_1} \frac{\|\phi_{1,T_1} \tilde\chi_q \|^2_{L^2}}{m_{T_1} \tilde \chi_{T_1}(x_q,t_q)} \right)^\frac12 
 \left(  \sum_{T_1}  m_{q_0,T} \tilde \chi_{T_1}(x_q,t_q)  \right)^\frac12.
\end{split}
\]
Next we claim the following estimate
\begin{equation} \label{keyw}
\begin{split}
\sum_{T_1 \in \calT_1} m_{q_0,T_1} \tilde \chi_{T_1}(x_q,t_q) & \ls  \| \tilde \chi_{S(q)} \phi_2 \|_{L^2}^2. 
\end{split}
\end{equation}
Using the definition of $m_{q_0,T_1}$ we identify  the function 
\[
\tilde \chi_{S(q)}= ( \sum_{T_1 \in \calT_1} \tilde \chi(x_q,t_q) \tilde \chi_{T_1} ) \chi_{q_0}
\]
which makes  \eqref{keyw} hold true. 
Then we note that $\tilde \chi_{S(q)}$ has the following decay property:
\[
\tilde \chi_{S(q)} (x,t) \ls c^{-4} \left( 1+ \frac{d((x,t), S(q))}{c^{-2}r}\right)^{-N}.
\]
Here the surface $S(q)$ is the translate by $c(q)$ of the neighborhood of size $r$ of cone of normals at $S_1$, which we denote by $\mathcal{CN}_1:=\{\alpha N_1(\zeta), \zeta \in S_1, \alpha \in \R \}$.  It is important to note that we do not consider the whole cone but only the part with $cR \leq \alpha \ls R$. Then the estimate is a consequence of the fact that the tubes $T_1$ passing thorough $q$ separate inside $q_0$ and of the separation between $q$ and $q_0$, which is quantified by $d(q,q_0) \ges cR$. Quantitatively speaking, given a point in $q_0$ close to $S(q)$, there are $\ls c^{-4}$ tubes $T_1$ passing through the point and $q$ - this follows from the dispersion estimate \eqref{disp} and 
the geometry of the family of tubes $\calT_1$.

Next we claim the following estimate:
\begin{equation} \label{estau1}
\sum_q \| \tilde \chi_{S(q)} \phi_2 \|_{L^2}  \left( \sum_{T_1 \cap q \ne \emptyset} \frac{\|\phi_{1,T_1}  \tilde \chi_q \|^2_{L^2}}{m_{T_1} \tilde \chi_{T_1}(x_q,t_q)} \right)^\frac12
  \| \phi_2 \tilde \chi_q \|_{L^2}  \| \phi_3 \tilde \chi_q \|_{L^2} 
  \ls c^{-C} r^{\frac{3}2} \Pi_{i=1}^3 M(\phi_i).
\end{equation}
Combing \eqref{estau1} with \eqref{keyw} gives \eqref{red1} and this concludes the proofs of all claims of the Proposition.

All that is left to prove is \eqref{estau1} and this can be broken down into the following two claims:
\begin{equation} \label{mes}
\sum_q \| \phi_2  \tilde \chi_{S(q)} \|_{L^2}^2  \| \phi_3 \tilde \chi_q \|_{L^2}^2 \ls c^{-C} r^{2}  M(\phi_2)  M(\phi_3)
\end{equation}
and
\begin{equation} \label{mes2}
\sum_q \left( \sum_{T_1 \cap q \ne \emptyset} \frac{\|\phi_{1,T_1}  \tilde \chi_q \|^2_{L^2}}{m_{T_1} \tilde \chi_{T_1}(x_q,t_q)} \right)
 \| \phi_2 \tilde \chi_q \|^2_{L^2} \ls c^{-C} r M(\phi_1).
\end{equation}
The proof of \eqref{mes2} is similar to the one we used in the bilinear theory, see the proof of the corresponding theorem \cite{Be2}. 
By rearranging the sum, it suffices to show
\[
\sum_{T_1} \sum_{q \cap T_1 \ne \emptyset}   \frac{\|\phi_{1,T_1} \tilde \chi_q \|^2_{L^2} 
\| \phi_2 \tilde \chi_q \|^2_{L^2}}{m_{T_1} \tilde \chi_{T_1}(x_q,t_q)} \ls r M(\phi_1).
\]
The inner sum is estimated as follows:
\[
\sum_{q \cap T_1 \ne \emptyset}   \frac{ \| \phi_2 \tilde \chi_q \|^2_{L^2}}{m_{T_1} \tilde \chi_{T_1}(x_q,t_q)}
\ls   \frac{ \| \phi_2 \tilde \chi_{T_1} \|^2_{L^2}}{m_{T_1}} \ls 1,
\]
and the outer one is estimated by
\[
\sum_{T_1} \sup_{q} \|\phi_{1,T_1} \tilde \chi_q \|^2_{L^2}  \ls r \sum_{T_1} M(\phi_{1,T_1}) \ls r M(\phi_1),
\]
which is obvious given the size of $q$ in the $x_1$-direction is $\approx r$ and the mass of $\phi_{1,T_1}$
is constant across slices in space with $x_1=constant$. 

We continue with the proof of \eqref{mes}. Using the fast decay of $\tilde \chi_{S(q)}$ away from $S(q)$ and of 
 $\tilde \chi_q$ away from $q$, it suffices to show that
\begin{equation} \label{mes3}
\sum_q \| \chi^c_{S(q)} \phi_2 \|_{L^2}^2  \|  \chi^c_q \phi_3 \|_{L^2}^2 \ls r^{2}  M(\phi_2)  M(\phi_3),
\end{equation}
where by $\chi^c_{S(q)}$ we denote the characteristic function of the set $S(q)$ and recall that $\chi^c_q$ is the characteristic function of $q$; this will come at the cost of picking 
a factor of $c^{-C}$. 

We define the following relation: $q' \sim q$ if $q' \cap S(q) \ne \emptyset$ and note that this is equivalent to saying that there is a tube $T_1 \in \calT_1$ intersecting both $q$ and $q'$ and that $d(q,q') \ges cR$. We start from the obvious inequality
\[
 \| \chi_{S(q)}^c \phi_2 \|_{L^2}^2 \ls \sum_{q' \sim q}  \| \chi_{q'}^c \phi_2 \|_{L^2}^2
\]
which implies
\[
\sum_q \| \chi_{S(q)}^c \phi_2 \|_{L^2}^2  \|  \chi_q^c \phi_3  \|_{L^2}^2 \ls 
\sum_q \sum_{q' \sim q} \| \chi_{q'}^c \phi_2 \|_{L^2}^2 \|  \chi_q^c \phi_3 \|_{L^2}^2
\]
We are tacitly using again at this point the full dispersion property of the set of normals $\calN_1$: the tubes $T_1$ passing through $q$
separate inside $q_0$; in the absence of this property, the above inequality would fail, as we would encounter large number of tubes 
$T_1 \in \calT_1$ passing through both $q$ and $q'$. 

At this point we use a wave packet decomposition for $\phi_2$ and $\phi_3$: we invoke again Proposition \ref{LeWP}, but this time with $c \approx 1$. Using \eqref{qest}, we reduce \eqref{mes3} to the following 
\begin{equation} \label{final}
\sum_q \sum_{q' \sim q} \left( \sum_{T_2 \cap q' \ne \emptyset} M(\phi_{T_2}) \right) 
 \left( \sum_{T_3 \cap q \ne \emptyset} M(\phi_{T_3}) \right) \ls M(\phi_2)  M(\phi_3).
\end{equation}
Here we skip writing down that the sum run over $T_2 \in \calT_2$ and $T_3 \in \calT_3$ for keeping the notation shorter and we will do so for the rest of the argument.

The key point in justifying \eqref{final} is that, as we vary $(q,q')$ with $q' \sim q$, the number of occurrences of a pair of tubes $(T_2,T_3)$  on the left hand-side is bounded by a universal constant. Indeed, if that is the case we use \eqref{q00e} to obtain the bound
\[
\sum_q \sum_{q' \sim q} \left( \sum_{T_2 \cap q' \ne \emptyset} M(\phi_{T_2}) \right) 
 \left( \sum_{T_3 \cap q \ne \emptyset} M(\phi_{T_3}) \right) \les \sum_{T_2 \in \calT_2} \sum_{T_3 \in \calT_3}
  M(\phi_{T_2})  M(\phi_{T_3}) \ls M(\phi_2)  M(\phi_3).
\]

We finish the argument by establishing an upper bound on the number of occurrences of a pair of tubes $(T_2,T_3)$ 
on the left-hand side of \eqref{final}. Assume that a pair $(T_2,T_3)$ shows up multiple times. That means that there are $(q,q'), (\tilde q, \tilde q')$ such that $q \sim q', \tilde q \sim \tilde q'$ and $q' \cap T_2 \ne \emptyset, \tilde q' \cap T_2 \ne \emptyset$,
$q \cap T_3 \ne \emptyset, \tilde q \cap T_3 \ne \emptyset$. We tolerate repeated occurrences coming from the setup 
$d(q, \tilde q) \les r$ and $d(q', \tilde q') \les r$, which are bounded by a universal constant, but rule out all the others.

Consider the case $d(q, \tilde q), d(q', \tilde q') \gg r$. This implies the following:

$c(q)-c(\tilde q) = \alpha_3 N_3 + O(r)$ for some $N_3=N_3(\zeta_3), \zeta_3 \in S_3$, $|\alpha_3 | \gg r$, 

$c(q')-c(\tilde q') = \alpha_2 N_2 + O(r)$ for some $N_2=N_2(\zeta_2), \zeta_2 \in S_2$, $|\alpha_2 | \gg r$,

$c(q)-c(q')=\alpha_1 N_1 + O(r), c(\tilde q)-c(\tilde q')= \tilde \alpha_1 \tilde N_1 + O(r), N_1, \tilde N_1 \in \mathcal{CN}$
$|\alpha_1|, |\tilde \alpha_1| \gg r$. 

Since $c(q)-c(q') - (c(\tilde q)-c(\tilde q'))= c(q)-c(\tilde q) - (c(q')-c(\tilde q'))$, this implies 
\[
\alpha_1 N_1 - \tilde \alpha_1 \tilde N_1 =  \alpha_3 N_3 - \alpha_2 N_2 + O(r)
\]
Since $\alpha_1 N_1 - \tilde \alpha_1 \tilde N_1 \in dspan \calN_1$ we can invoke the result in Lemma \ref{GL} to obtain
\[
| \alpha_1 N_1 - \tilde \alpha_1 \tilde N_1 -  \alpha_3 N_3 + \alpha_2 N_2 | \geq max(|\alpha_2|, |\alpha_3|) \gg r
\]
which is in contradiction with the previous statement. 

The other two cases, $d(q, \tilde q) \gg r, d(q', \tilde q') \les r$ and $d(q, \tilde q) \les r, d(q', \tilde q') \gg r$ are ruled out in a similar way: one notices that in the above proof we needed only one of coefficients $\alpha_2, \alpha_3$ to have large absolute value.  

\end{proof}

\begin{proof}[Proof of Proposition \ref{keyP}] Let $\phi_i=\calE_i f_i$ satisfying the margin requirements \eqref{mrpg}. Let $Q_R$ be an arbitrary cube of radius $R$. From Lemma \ref{AVL}
it follows that there is a cube $Q \subset 4Q_R$ of size $2R$ such that
\begin{equation} \label{aux21}
\| \phi_1 \phi_2 \phi_3 \|_{L^p(Q_R)} \leq (1+cC) \| \phi_1 \phi_2 \phi_3 \|_{L^p(I^{c,j}(Q))}.
\end{equation}
Using the result of Proposition \ref{NLP} we build table $\Phi_1=\Phi_c(\phi_1,\phi_2,Q)$ on $\phi_1$ with depth $C_0$ and estimate as follows
\begin{equation*}
\begin{split}
\| \phi_1 \phi_2 \phi_3 \|^p_{L^p(I^{c,C_0}(Q))} & = \sum_{q_0' \in \calQ_{C_0}(Q)} \| \phi_1 \phi_2 \phi_3 \|^p_{L^p((1-c)q_0')} \\
& \leq \sum_{q_0,q_0' \in \calQ_{C_0}(Q)} \| \Phi^{(q_0)}_1 \phi_2 \phi_3 \|^p_{L^p((1-c)q_0')} \\ 
&  = \sum_{q_0 \in \calQ_{C_0}(Q)} \| \Phi^{(q_0)}_1 \phi_2 \phi_3 \|^p_{L^p((1-c)q_0)} \\
& +  \sum_{q_0 \in \calQ_{C_0}(Q)}  
\sum_{q_0' \in \calQ_{C_0}(Q) \setminus \{q_0\}} 
\| \Phi^{(q_0)}_1 \phi_2 \phi_3 \|^p_{L^p((1-c)q_0')} \\
& \leq \sum_{q_0 \in \calQ_{C_0}(Q)} \| \Phi^{(q_0)}_1 \phi_2 \phi_3 \|^p_{L^p((1-c)q_0)} 
+  \left( C c^{-C} R^{ \frac{n+4}2(\frac1p-\frac32 \cdot \frac{n+2}{n+4})+}  \Pi_{i=1}^3 M^\frac{1}2(\phi_i) \right)^p. 
\end{split}
\end{equation*}

We owe an explanation on the how we obtained the last term in the sequence of inequalities above.
Based on the property \eqref{Phia2} of tables we conclude that, for each $q_0, q_0'$ with $q_0 \ne q_0'$ the following hold true
\[
\| \Phi_1^{(q_0)} \phi_2 \phi_3 \|_{L^1((1-c)q_0')}  \leq C c^{-C} R^{-\frac{n-2}4+} \Pi_{i=1}^3 M^\frac12(\phi_i).
\]
Given the size of the cubes, we easily obtain an $L^2$ estimate of type $\| \phi \|_{L^2(q_0')} \leq R^\frac12 M(\phi)$ for each term,
form which it follows that
\[
\| \Phi^{(q_0)} \phi_2 \phi_3 \|_{L^\frac23((1-c)q_0')}  \leq C R^{\frac32} \Pi_{i=1}^3 M^\frac12(\phi_i).
\]
By interpolation, we obtain the $L^p$ bound
\[
\| \Phi^{(q_0)}_1 \phi_2 \phi_3 \|_{L^p((1-c)q_0')}  \leq C c^{-C} R^{\frac{n+4}2(\frac1p-\frac32 \cdot \frac{n+2}{n+4})+}  \Pi_{i=1}^3 M^\frac12(\phi_i), 
\]
which holds true for any $q_0 \ne q_0'$. In conclusion, we have just finished proving the following estimate:
\begin{equation*}
\begin{split}
\| \phi_1 \phi_2 \phi_3 \|^p_{L^p(I^{c,C_0}(Q))}  \leq \sum_{q_0 \in \calQ_{C_0}(Q)} \| \Phi^{(q_0)}_1 \phi_2 \phi_3 \|^p_{L^p((1-c)q_0)} 
+ \left( C c^{-C} R^{ \frac{n+4}2(\frac1p-\frac32 \cdot \frac{n+2}{n+4})+}  \Pi_{i=1}^3 M^\frac{1}2(\phi_i) \right)^p. 
\end{split}
\end{equation*}
Next we repeat the procedure for $\phi_2$ and $\phi_3$: we construct the tables on $\phi_2,\phi_3$, $\Phi_2=\Phi_c(\phi_2,\phi_3,Q), \Phi_3=\Phi_c(\phi_3,\Phi_1,Q)$  with depth $C_0$ and estimate in a similar manner (see Remark \ref{RNLP} after Proposition \ref{NLP}); we obtain
\begin{equation*}
\begin{split}
\| \phi_1 \phi_2 \phi_3 \|^p_{L^p(I^{c,C_0}(Q))} \leq  \sum_{q_0 \in \calQ_{C_0}(Q)} \| \Phi^{(q_0)}_1 \Phi_2^{(q_0)} 
\Phi_3^{(q_0)} \|^p_{L^p((1-c)q_0)} 
+ \left( C c^{-C} R^{\frac{n+4}2(\frac1p-\frac32 \cdot \frac{n+2}{n+4})+}  \Pi_{i=1}^3 M^\frac{1}2(\phi_i) \right)^p. 
\end{split}
\end{equation*}
We recall that $q_0$ has size $\frac{4R}{2^{C_0}} \leq \frac{R}2$ (which in fact may be seen as setting the threshold needed for $C_0$).
Using this we conclude that
\[
\begin{split}
\| \phi_1 \phi_2 \phi_3 \|^p_{L^p(I^{c,C_0}(Q))} &  \leq  \sum_{q_0 \in \calQ_{C_0}(Q)} \left( \bar A_p(\frac{R}2) \right)^p
\Pi_{i=1}^3 M(\Phi_i^{(q_0)})^\frac{p}2  \\
& +  \left( C c^{-C} R^{\frac{n+4}2(\frac1p-\frac32 \cdot \frac{n+2}{n+4})+}  \Pi_{i=1}^3 M^\frac{1}2(\phi_i) \right)^p \\
& \leq \left( \bar A_p(\frac{R}2) \right)^p   \Pi_{i=1}^3 \left( \sum_{q_0 \in \calQ_{C_0}(Q)} M(\Phi^{(q_0)}) \right)^\frac{p}2  \\
& +  \left( C c^{-C} R^{\frac{n+4}2(\frac1p-\frac32 \cdot \frac{n+2}{n+4})+}  \Pi_{i=1}^3 M^\frac{1}2(\phi_i) \right)^p \\
& \leq  \left( \bar A_p(\frac{R}2) \right)^p  \Pi_{i=1}^3 M(\Phi_i)^\frac{p}2 
+   \left( C c^{-C} R^{\frac{n+4}2(\frac1p-\frac32 \cdot \frac{n+2}{n+4})+}  \Pi_{i=1}^3 M^\frac{1}2(\phi_i) \right)^p \\
& \leq (1+cC)^{3p} \left( \bar A_p(\frac{R}2) \right)^p \Pi_{i=1}^3 M(\phi_i)^\frac{p}2 
 +  \left( C c^{-C} R^{\frac{n+4}2(\frac1p-\frac32 \cdot \frac{n+2}{n+4})+}  \Pi_{i=1}^3 M^\frac{1}2(\phi_i) \right)^p.
\end{split}
\] 
where we have used \eqref{PhiM2} in the last line. In using the induction-type bound on $\Pi_{i=1}^3 \Phi_i^{(q_0)}$ we are
using the margin bounds on $\Phi_i$ from \eqref{MPhi2} to conclude with \eqref{mrpg}; this is easily seen to be the case
provided $R$ is large enough to satisfy $CR^{-\frac12} \leq R^{-\frac14}$. In passing to the second estimate we have used the following
sequence inequality
\[
 \sum_i a_i^q b_i^q c_i^q \leq \left( \sum_i a_i \right)^q \left( \sum_i b_i \right)^q  \left( \sum_i c_i \right)^q 
\]
which holds true for $q \geq \frac13$. This easily follows from interpolating the trivial bounds 
\[
\| a b c \|_{l^\frac13},  \| a b c \|_{l^\infty} \leq \| a \|_{l^1} \| b \|_{l^1}  \| c \|_{l^1}. 
\]
where by $abc$ we mean the sequence $\{ a_i b_i c_i \}_{i}$. 

Recalling \eqref{aux21}, we obtain that for any cube $Q_R$ of size $R$ the following holds true
\[
\begin{split}
\| \phi_1 \phi_2 \phi_3 \|_{L^p(Q_R)} & \leq (1+cC)  \Pi_{i=1}^3 M^\frac{1}2(\phi_i) \\
& \cdot \left( (1+cC)^{3p} \left( \bar A_p(\frac{R}2) \right)^p   + 
\left( C c^{-C} R^{\frac{n+4}2(\frac1p-\frac32 \cdot \frac{n+2}{n+4})+}  \right)^p
 \right)^\frac1p 
\end{split}
\]
As a consequence we obtain 
\[
A_p(R) \leq (1+cC) \left( (1+cC)^{3p} \left( \bar A_p(\frac{R}2) \right)^p   + 
\left( C c^{-C} R^{\frac{n+4}2(\frac1p-\frac32 \cdot \frac{n+2}{n+4})+}  \Pi_{i=1}^3 \right)^p \right)^\frac1p.
\]
after redefining $C$, and this is precisely the statement in \eqref{ApR}. 
\end{proof}

\section{The optimality of $p(k)$} \label{pk}

In this section we provide a generalization of the "squashed-cap" construction from \cite{FoKl} with the purpose of establishing the optimality of $p(k)$. To keep things simple, we assume that the surfaces are subsets of the $\S^{n} \subset \R^{n+1}$, the unit sphere in $\R^{n+1}$, and moreover that they are neighborhoods of the points $e_i$; here $e_i=(0,..,0,1,0,..0)$ is the unit vector whose coordinates are $1$ on the $i$'th position and $0$ elsewhere. 

For each $1 \leq i \leq k$, we construct the rectangular parallelepipeds 
\[
D_i=\{  \zeta \in \R^{n+1}, |\zeta_i -1| < \epsilon^2, | \zeta_j| < \epsilon^2, j = \{1,..,k \} \setminus \{ i \}, 
|\zeta_l|  < \epsilon, k+1 \leq l \leq n+1 \}.
\]
We let $S_i = \S^{n} \cap D_i$ and note that $e_i$ is transversal to $S_i$. Thus $S_i$ can be easily parametrized by 
$\Sigma_i: U_i \subset \{ \zeta_i=0 \} \rightarrow S_i$ by 
\[
\Sigma_i(\zeta_1,..,\bar \zeta_{i},..,\zeta_{n+1})= (\zeta_1,..,\zeta_{i-1},\sqrt{1-\zeta_1^2-..-\zeta_{i-1}^2-\zeta^2_{i+1}-..-\zeta^2_{n+1}},\zeta_{i+1},..,\zeta_{n+1}). 
\]
Here, the set $U_i$ is characterized by $| \zeta_j| < \epsilon^2, j = \{1,..,k \} \setminus \{ i \}, \zeta_i=0,
|\zeta_l|  < \epsilon, k+1 \leq l \leq n+1$. We pick $f_i=1$ on $U_i$, therefore
\[
\| f_i \|_{L^2(U_i)} = |U_i|^\frac12 \approx (\epsilon^{2(k-1)} \epsilon^{n+1-k})^\frac12= \epsilon^{\frac{n+k-1}2}
\]
where $|U_i|$ is the measure of $U_i \subset \R^{n}$. 

Next we consider the rectangular parallelepiped $R \subset \R^{n+1}$ described by
\[
R_c=\{  x \in \R^{n+1}, |x_i| \leq c \epsilon^{-2}, i = \{1,..,k \}, |x_i|  < c \epsilon^{-1}, 
k+1 \leq i \leq n+1 \}.
\]
Recall that
\[
\calE_i f_i (x) = \int_{U_i} e^{i x\cdot \Sigma_i(\xi)} d\xi.
\]
By choosing $c$ small enough so that $|x\cdot \Sigma_i(\xi) - x_i | \ll 1$, for all $\xi \in U_i, x \in R_c$, it follows that
\[
|\calE_i f_i (x)| \ges |U_i| \approx \epsilon^{2(k-1)} \cdot \epsilon^{n+1-k} = \epsilon^{n+k-1}, \forall x \in R_c. 
\]
Note that the smallness of $c$ above is universal, and, most important, it is independent of $\epsilon$. 
Then we have
\[
\| \Pi_{i=1}^k \calE_i f_i \|_{L^p} \ges (\epsilon^{n+k-1})^k |R_c|^\frac1p 
\approx \epsilon^{k(n+k-1)} \cdot (\epsilon^{-2k} \epsilon^{-(n+1-k)})^\frac1p = 
\epsilon^{k(n+k-1)} \cdot \epsilon^{-\frac{n+1+k}p}
\]
Then our results would imply
\[
\epsilon^{k(n+k-1)} \cdot \epsilon^{-\frac{n+1+k}p} \les \epsilon^{k \cdot \frac{n+k-1}2}
\]
which, by taking $\epsilon \rightarrow 0$, implies $\frac{k(n+k-1)}2 - \frac{n+k+1}p \geq 0$, thus 
$p \geq \frac{2(n+k+1)}{k(n+k-1)}$. 

The above construction shows the optimality of $p(k)$ in the case of subsets of the sphere and it can be easily adapted
to the case of the paraboloid. In fact, it seems to be relevant for hypersurfaces with non-vanishing Gaussian curvature.

But this counterexample works the same way for multi-conical surfaces based on the following simple observation:
all that is needed is the consistency of scale $\epsilon$ in the $\zeta_{k+1},..,\zeta_{n+1}$ directions with the $\epsilon^2$ scale
in the normal direction at each piece; this is a consequence of the quadratic character of the surfaces in the directions 
$\zeta_{k+1},..,\zeta_{n+1}$.  The details are left as an exercise. 

\bibliographystyle{amsplain} \bibliography{HA-refs}

\providecommand{\bysame}{\leavevmode\hbox to3em{\hrulefill}\thinspace}
\providecommand{\MR}{\relax\ifhmode\unskip\space\fi MR }
\providecommand{\MRhref}[2]{%
  \href{http://www.ams.org/mathscinet-getitem?mr=#1}{#2}
}
\providecommand{\href}[2]{#2}
\begin{thebibliography}{10}

\bibitem{Be1}
Ioan Bejenaru, \emph{The multilinear restriction estimate: a short proof and a
  refinement}, preprint.

\bibitem{Be2}
\bysame, \emph{Optimal bilinear restriction estimates for general hypersurfaces
  and the role of the shape operator}, preprint.

\bibitem{BeCaTa}
Jonathan Bennett, Anthony Carbery, and Terence Tao, \emph{On the multilinear
  restriction and {K}akeya conjectures}, Acta Math. \textbf{196} (2006), no.~2,
  261--302. \MR{2275834 (2007h:42019)}

\bibitem{Bou-CM}
J.~Bourgain, \emph{Estimates for cone multipliers}, Geometric aspects of
  functional analysis ({I}srael, 1992--1994), Oper. Theory Adv. Appl., vol.~77,
  Birkh\"auser, Basel, 1995, pp.~41--60. \MR{1353448 (96m:42022)}

\bibitem{Bou-SMF}
\bysame, \emph{On the {S}chr\"odinger maximal function in higher dimension},
  Tr. Mat. Inst. Steklova \textbf{280} (2013), no.~Ortogonalnye Ryady, Teoriya
  Priblizhenii i Smezhnye Voprosy, 53--66. \MR{3241836}

\bibitem{BoDeGu}
Jean Bourgain and Ciprian Demeter, \emph{The proof of the $l^2$ decoupling
  conjecture}, arXiv:1403.5335.

\bibitem{BoDe}
\bysame, \emph{The proof of the {$l^2$} decoupling conjecture}, Ann. of Math.
  (2) \textbf{182} (2015), no.~1, 351--389. \MR{3374964}

\bibitem{BoGu}
Jean Bourgain and Larry Guth, \emph{Bounds on oscillatory integral operators
  based on multilinear estimates}, Geom. Funct. Anal. \textbf{21} (2011),
  no.~6, 1239--1295. \MR{2860188 (2012k:42018)}

\bibitem{doCa}
Manfredo~Perdig{\~a}o do~Carmo, \emph{Riemannian geometry}, Mathematics: Theory
  \& Applications, Birkh\"auser Boston, Inc., Boston, MA, 1992, Translated from
  the second Portuguese edition by Francis Flaherty. \MR{1138207 (92i:53001)}

\bibitem{DL}
Xiumin Du and Xiaochun Li, \emph{Pointwise convergence of the solutions to
  scr\"odinger equation in {$\R^2$}}, arXiv:1508.05437.

\bibitem{FoKl}
Damiano Foschi and Sergiu Klainerman, \emph{Bilinear space-time estimates for
  homogeneous wave equations}, Ann. Sci. \'Ecole Norm. Sup. (4) \textbf{33}
  (2000), no.~2, 211--274. \MR{1755116 (2001g:35145)}

\bibitem{Gu-res}
Larry Guth, \emph{A restriction estimate using polynomial partitioning},
  arXiv:1407.1916.

\bibitem{Gu-main}
\bysame, \emph{The endpoint case of the {B}ennett-{C}arbery-{T}ao multilinear
  {K}akeya conjecture}, Acta Math. \textbf{205} (2010), no.~2, 263--286.
  \MR{2746348 (2012c:42027)}

\bibitem{KeMe}
Carlos~E. Kenig and Frank Merle, \emph{Global well-posedness, scattering and
  blow-up for the energy-critical, focusing, non-linear {S}chr\"odinger
  equation in the radial case}, Invent. Math. \textbf{166} (2006), no.~3,
  645--675. \MR{2257393 (2007g:35232)}

\bibitem{Lee-SMF}
Sanghyuk Lee, \emph{Endpoint estimates for the circular maximal function},
  Proc. Amer. Math. Soc. \textbf{131} (2003), no.~5, 1433--1442 (electronic).
  \MR{1949873 (2003k:42035)}

\bibitem{Lee-BR}
\bysame, \emph{Bilinear restriction estimates for surfaces with curvatures of
  different signs}, Trans. Amer. Math. Soc. \textbf{358} (2006), no.~8,
  3511--3533 (electronic). \MR{2218987 (2007a:42023)}

\bibitem{LeeVa}
Sanghyuk Lee and Ana Vargas, \emph{Restriction estimates for some surfaces with
  vanishing curvatures}, J. Funct. Anal. \textbf{258} (2010), no.~9,
  2884--2909. \MR{2595728 (2011j:42029)}

\bibitem{MeVe}
F.~Merle and L.~Vega, \emph{Compactness at blow-up time for {$L^2$} solutions
  of the critical nonlinear {S}chr\"odinger equation in 2{D}}, Internat. Math.
  Res. Notices (1998), no.~8, 399--425. \MR{1628235 (99d:35156)}

\bibitem{St}
Elias~M. Stein, \emph{Harmonic analysis: real-variable methods, orthogonality,
  and oscillatory integrals}, Princeton Mathematical Series, vol.~43, Princeton
  University Press, Princeton, NJ, 1993, With the assistance of Timothy S.
  Murphy, Monographs in Harmonic Analysis, III. \MR{1232192 (95c:42002)}

\bibitem{TV-CM1}
T.~Tao and A.~Vargas, \emph{A bilinear approach to cone multipliers. {I}.
  {R}estriction estimates}, Geom. Funct. Anal. \textbf{10} (2000), no.~1,
  185--215. \MR{1748920 (2002e:42012)}

\bibitem{TV-CM2}
\bysame, \emph{A bilinear approach to cone multipliers. {II}. {A}pplications},
  Geom. Funct. Anal. \textbf{10} (2000), no.~1, 216--258. \MR{1748921
  (2002e:42013)}

\bibitem{Tao-BW}
Terence Tao, \emph{Endpoint bilinear restriction theorems for the cone, and
  some sharp null form estimates}, Math. Z. \textbf{238} (2001), no.~2,
  215--268. \MR{1865417 (2003a:42010)}

\bibitem{Tao-BP}
\bysame, \emph{A sharp bilinear restrictions estimate for paraboloids}, Geom.
  Funct. Anal. \textbf{13} (2003), no.~6, 1359--1384. \MR{2033842
  (2004m:47111)}

\bibitem{Tao-book}
\bysame, \emph{Nonlinear dispersive equations}, CBMS Regional Conference Series
  in Mathematics, vol. 106, Published for the Conference Board of the
  Mathematical Sciences, Washington, DC; by the American Mathematical Society,
  Providence, RI, 2006, Local and global analysis. \MR{2233925 (2008i:35211)}

\bibitem{Wo}
Thomas Wolff, \emph{A sharp bilinear cone restriction estimate}, Ann. of Math.
  (2) \textbf{153} (2001), no.~3, 661--698. \MR{1836285 (2002j:42019)}

\end{thebibliography}

\end{document}